\documentclass[11pt,letterpaper,reqno]{amsart}

\usepackage[margin=1.02in]{geometry}
\usepackage[T1]{fontenc}
\usepackage[utf8]{inputenc}
\usepackage{lmodern}
\usepackage{microtype}
\usepackage{tikz-cd}

\usepackage{amssymb,mathtools,mathrsfs}
\usepackage{graphicx,booktabs,enumitem}

\usepackage{xcolor}
\usepackage{verbatim}
\definecolor{paperblue}{HTML}{124878}
\usepackage[
  colorlinks=true,
  linkcolor=paperblue,
  citecolor=paperblue,
  urlcolor=paperblue,
  linktoc=all,
  bookmarksnumbered=true,
  pdfencoding=auto,
  psdextra
]{hyperref}
\hypersetup{pdftitle={Dynamics of meromorphic maps on compact K\"ahler manifolds},
  pdfauthor={Muhan Luo and Qi Zhou}}

\usepackage[
  backend=biber,
  style=alphabetic,
  maxnames=10,
  giveninits=true,
  sortcites=true,      
  sorting=nyt          
]{biblatex}
\renewbibmacro*{issue+date}{%
  \ifboolexpr{test {\iffieldundef{issue}} and test {\iffieldundef{year}}}
    {}
    {\printtext[parens]{\printfield{issue}\setunit*{\addspace}\usebibmacro{date}}}%
  \newunit}

\usepackage{aliascnt}
\usepackage[capitalize,nameinlink,noabbrev]{cleveref}

\newcommand{\C}{\mathbb{C}}

\newcommand{\Z}{\mathbb{Z}}

\newcommand{\Pb}{\mathbb{P}}

\DeclareMathOperator{\Fix}{Fix}

\DeclareMathOperator{\supp}{supp}
\DeclareMathOperator{\dist}{dist}

\DeclareMathOperator{\diam}{diam}

\renewcommand{\d}{\mathop{}\!\mathrm{d}}

\newcommand{\cC}{\mathcal{C}}

\newcommand{\cG}{\mathcal{G}}

\newcommand{\cS}{\mathcal{S}}

\newcommand{\cV}{\mathcal{V}}

\newcommand{\vep}{\varepsilon}

\newcommand{\newsharedtheorem}[2]{%
  \newaliascnt{#1}{theorem}%
  \newtheorem{#1}[#1]{#2}%
  \aliascntresetthe{#1}%
}

\theoremstyle{plain}
\newtheorem{theorem}{Theorem}[section]
\newsharedtheorem{lemma}{Lemma}
\newsharedtheorem{proposition}{Proposition}
\newsharedtheorem{corollary}{Corollary}

\theoremstyle{definition}
\newsharedtheorem{definition}{Definition}
\newsharedtheorem{example}{Example}

\theoremstyle{remark}
\newsharedtheorem{remark}{Remark}

\Crefname{theorem}{Theorem}{Theorems}
\Crefname{lemma}{Lemma}{Lemmas}
\Crefname{proposition}{Proposition}{Propositions}
\Crefname{corollary}{Corollary}{Corollaries}
\Crefname{definition}{Definition}{Definitions}
\Crefname{example}{Example}{Examples}
\Crefname{remark}{Remark}{Remarks}
\Crefname{appendix}{Appendix}{Appendices}

\numberwithin{equation}{section}

\title[MME for meromorphic maps on compact K\"ahler manifolds]{Measures of maximal entropy for meromorphic maps on compact K\"ahler manifolds}
\author{Muhan Luo}
\address{Department of Mathematics, National University of Singapore, Singapore 119076}
\email{luomuh98@gmail.com}

\author{Qi Zhou}
\address{Department of Mathematics, National University of Singapore, Singapore 119076}
\email{e1124859@u.nus.edu}

\date{\today} 
\subjclass[2020]{37F80, 37B10, 37D25, 32H50}
\keywords{Meromorphic maps, measures of maximal entropy, periodic points, symbolic dynamics}

\begin{document}

\begin{abstract}
Let $f$ be a dominant meromorphic self-map of a compact K\"ahler manifold. Assume that $f$ has a strictly dominant intermediate dynamical degree $\lambda$ and that the number of isolated $n$-periodic points, counted with multiplicity, is at most $\lambda^n+o(\lambda^n)$. Then $f$ admits at most one adapted invariant probability measure of entropy $\log\lambda$. If such a measure exists, its natural extension is Bernoulli, and the isolated periodic points equidistribute towards it when counted with or without multiplicity. We apply this criterion to several classes of maps in complex dynamics, resolving longstanding open problems.
\end{abstract}

\maketitle
\tableofcontents

\section{Introduction}\label{sec:intro}
Entropy is one of the fundamental invariants of a dynamical system, quantifying the complexity of its orbit structure. Measures of maximal entropy (MME) maximize this complexity. Understanding their existence, uniqueness, statistical properties and relation to periodic orbits consequently occupies a central place in ergodic theory.

Extensive work has been done on these questions in various holomorphic or meromorphic complex dynamical systems, where the construction and properties of the Green currents play a central role; see for instance \cite{DS05JAMS,Bayraktar13JGA}. Following foundational work on polynomial automorphisms of $\mathbb C^2$ \cite{BS91,BS92,BLS93a}, MMEs have been constructed and studied for automorphisms of compact K\"ahler manifolds \cite{Cantat01Acta,DS05JAMS,DS10JAG,BD26AM}, meromorphic maps with dominant topological degree \cite{Guedj05Annals,DS06CMH}, surface maps with small topological degree \cite{BD05Duke,DDG1Indiana,DDG2CMH,DDG3ENS}, and higher-dimensional rational maps \cite{DS05JFA,dTH-Vigny,Vigny14AIF}. See also the surveys \cite{S99PS,Guedj10,ABeBrDScSi10Book}. Given the extensive literature on these topics, we do not attempt an exhaustive survey. Further references are provided where relevant throughout the paper.

Classical proofs of uniqueness of MME and equidistribution of periodic points typically rely on delicate geometric analysis, including control of local inverse branches or of the size and geometry of stable and unstable manifolds. Obtaining such control becomes particularly challenging in higher dimensions. Uniqueness has been established for holomorphic endomorphisms of projective space \cite{BD01IHES}, meromorphic maps with dominant topological degree \cite{DS06CMH}, regular polynomial automorphisms \cite{dTh10}, and holomorphic automorphisms of compact K\"ahler manifolds \cite{dTH-Dinh12AM}.

For periodic-point equidistribution, a common strategy combines lower bounds with sharp upper bounds on the total number of periodic points. The lower bounds are obtained by constructing sufficiently many periodic points with the expected distribution, using Pesin theory or contracting local inverse branches. Previous works include \cite{Lyubich83FAA,BLS93b,DDG3ENS,Briend-Duval99,DNT15,Dujardin06duke,DS16MAnn,LZ25}. See also \cite{DDK25,TDK26}. Extending these approaches to more general meromorphic maps nevertheless remains challenging, as the required control of invariant manifolds, Green currents and their intersections often depends strongly on the geometry of the maps and the ambient spaces.

In this paper, we develop a new unified approach to both questions. The central idea is to transfer them from complex dynamics to countable Markov shifts through symbolic coding. We combine the coding theorem of Lima--Obata--Poletti \cite{LOP26JEMS}, applied to the natural extension, with Buzzi's injective recoding theorem \cite{Buzzi20JMD}. The resulting coding is injective on periodic points, allowing an upper bound on the number of isolated periodic points of the original map to pass to the symbolic system. In contrast to the situation in complex dynamics, the desired lower bounds are more accessible for countable Markov shifts; see \cref{lemma:lower bound}. Our approach brings the upper bounds available from complex geometry into this symbolic setting. This yields strong dynamical conclusions at the symbolic level, including uniqueness of MME and equidistribution of periodic points, which can then be transferred back to the original complex dynamical system.

We now introduce the necessary notions to state our main theorem. Let $X$ be a compact connected K\"ahler manifold of complex dimension $k$, equipped with a K\"ahler form $\omega$, and let $f\colon X\dashrightarrow X$ be a dominant meromorphic map. Its dynamical degrees are
\[
    d_j(f):=\lim_{n\to\infty}\left(\int_X(f^n)^*\omega^j\wedge\omega^{k-j}\right)^{1/n}, \qquad 0\leq j\leq k.
\]
In particular, $d_0(f)=1$ and $d_k(f)$ is the topological degree. We refer to \cite{DS05Annals} for their basic properties. Denote the indeterminacy set by $I(f)$ and the critical locus in $X\setminus I(f)$ by $C(f)$, and set
\[
    \cS:=I(f)\cup C(f),\qquad \rho(x):=\min\{1,\dist(x,\cS)\}.
\]
The set $\cS$ is a proper analytic subset of $X$. When $\cS=\varnothing$, we set $\rho\equiv1$.

A Borel probability measure $\nu$ is $f$-invariant if $\nu(I(f))=0$ and $f_*\nu=\nu$. Following \cite{ALP24MAMS,LOP26JEMS}, such a measure is called \emph{adapted} if
\[
    \int_X-\log\rho(x)\d\nu(x)<\infty.
\]
In particular, $\nu(\cS)=0$. For an adapted invariant measure $\nu$, the \emph{natural extension} of the system $(X,f,\nu)$ consists of regular orbits $(x_j)_{j\in\Z}$ satisfying $f(x_j)=x_{j+1}$, with the invertible shift $\widehat f$ and the unique invariant lift probability measure $\widehat\nu$. Its construction, metric and entropy properties are described in \cref{sec:natural-extension}. An invertible measure-preserving system is called \emph{Bernoulli} if it is measurably isomorphic, modulo null sets, to a two-sided shift with a product probability measure. We call a point $x$ \emph{$n$-periodic} if its forward orbit avoids $I(f)$ and $f^n(x)=x$. Let $P_n(f)$ denote the isolated $n$-periodic points, counted with multiplicity. 

We call $f$ \emph{cohomologically $p$-hyperbolic} if $d_p(f)>\max_{j\ne p}d_j(f)$. Since $d_0(f)=1$ and $d_k(f)\geq1$, this necessarily implies $p\geq1$ and $d_p(f)>1$. The expanding case $p=k$, in which the topological degree is strictly dominant, has been extensively studied; see \cite{DNT15} and the references therein. We therefore focus on the intermediate case $1\leq p\leq k-1$, although most of our arguments also apply when $p=k$.

Guedj conjectured that a cohomologically hyperbolic meromorphic map admits a canonical measure which is the unique measure of maximal entropy, is mixing and hyperbolic, and describes the asymptotic distribution of saddle periodic points; see \cite[Conjecture~3.2]{Guedj05ETDS} and \cite[p.~100]{Guedj10}. Our main result is the following criterion, which gives a partial answer to this conjecture.

\begin{theorem}\label{thm:main}
Let $f\colon X\dashrightarrow X$ be a dominant meromorphic map of a compact connected K\"ahler manifold of complex dimension $k$. Suppose that $\lambda:=d_p(f)>\max_{j\ne p}d_j(f)$ for some $1\le p\le k-1$, and assume that
\begin{equation}\label{eq:criterion-upper}
    \#P_n(f)\leq\lambda^n+o(\lambda^n).
\end{equation}
Then the following statements hold.
\begin{enumerate}[label=\textup{(\arabic*)}]
    \item There is at most one adapted invariant probability measure $\mu$ with maximal entropy $h_\mu(f)=\log\lambda$.

    \item If such a measure $\mu$ exists, its natural extension is Bernoulli. If moreover $f$ is bimeromorphic, $(X,\mu,f)$ itself is Bernoulli.

   \item If such a measure $\mu$ exists, there is a constant $\chi>0$ with the following property. For each $n\geq1$, let $P_{n,\chi}(f)\subset P_n(f)$ consist of the points whose orbits avoid $\cS$ and for which $D_xf^n$ has exactly $p$ eigenvalues of modulus greater than $e^{n\chi}$ and $k-p$ eigenvalues of modulus less than $e^{-n\chi}$. Every point in $P_{n,\chi}(f)$ is a simple fixed point of $f^n$, and hence has multiplicity one.
   Then, for any sequence of subsets $(Q_n)_{n\geq 1}$ satisfying $P_{n,\chi}(f)\subset Q_n\subset P_n(f)$, we have
    \begin{equation}\label{eq:equidist}
        \lambda^{-n}\sum_{x\in Q_n}\delta_x\to\mu
    \end{equation}
    as $n$ goes to infinity where $\delta_x$ denotes the Dirac measure at $x$.
\end{enumerate}
\end{theorem}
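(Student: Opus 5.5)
The plan is to transfer everything to a countable Markov shift and work there. First, let $\mu$ be an adapted invariant measure with $h_\mu(f)=\log\lambda$. By the entropy bounds of de Thélin and Dinh--Sibony, the Lyapunov exponents of such a measure satisfy $\chi_p>\tfrac12\log(\lambda/d_{p-1})>0>\tfrac12\log(d_{p+1}/\lambda)\ge\chi_{p+1}$ (this is where cohomological $p$-hyperbolicity enters). Hence $\mu$ is hyperbolic, with exactly $p$ positive and $k-p$ negative exponents, all bounded away from $0$ by some $\chi_0>0$ that depends only on the dynamical degrees. I would fix $0<\chi<\chi_0$. Next, apply the Lima--Obata--Poletti coding to the natural extension $(\widehat X,\widehat f)$ at level $\chi$. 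This gives a locally compact countable Markov shift $(\Sigma,\sigma)$ and a Hölder map $\pi\colon\Sigma\to\widehat X$ with $\pi\circ\sigma=\widehat f\circ\pi$. The image $\pi(\Sigma^\#)$ carries every $\chi$-hyperbolic adapted measure, and $\pi$ is finite-to-one there. I would then apply Buzzi's injective recoding to replace $(\Sigma,\pi)$ by a Markov shift that is injective on $\Sigma^\#$, in particular on periodic orbits. Every $\chi$-hyperbolic ergodic measure, including every ergodic component of $\mu$ (each also has entropy $\log\lambda$, by the variational principle $h_{\rm top}\le\log\lambda$), then lifts to a $\sigma$-invariant measure of the same entropy.

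The key counting step is the following. A periodic point of $\sigma$ of period $n$ projects to a periodic orbit in $\widehat X$, hence to an $n$-periodic point $x_0$ of $f$. Its shadowing orbit is uniformly $\chi$-hyperbolic, so $D_{x_0}f^n$ has $p$ eigenvalues of modulus $>e^{n\chi}$ and $k-p$ of modulus $<e^{-n\chi}$. Thus $x_0\in P_{n,\chi}(f)$ is a simple, isolated fixed point of $f^n$. Injectivity then gives $\#\{\underline v\in\Sigma:\sigma^n\underline v=\underline v\}\le\#P_{n,\chi}(f)\le\lambda^n+o(\lambda^n)$, summed over the relevant irreducible components. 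On the other hand, \cref{lemma:lower bound} says that an irreducible component $\Sigma'$ carrying a measure of entropy $\log\lambda$ has Gurevich entropy $\log\lambda$, and its periodic points grow at least like $\lambda^n$ along multiples of its period. Two distinct ergodic MMEs would sit on two distinct components, or else on one non-positive-recurrent component, and either situation forces roughly $2\lambda^n$ periodic points or a contradiction with positive recurrence. The same bound forces period one (aperiodicity), so the lift of $\mu$ is the unique MME of a positive recurrent mixing component. This proves (1).

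For (2), the unique MME of a topologically mixing positive recurrent countable Markov shift is Bernoulli by Sarig's theorem. Since the lift is injective on a full-measure set, it is isomorphic to $(\widehat X,\widehat f,\widehat\mu)$. When $f$ is bimeromorphic, the natural extension is isomorphic to $(X,f,\mu)$ itself. For (3), the lower bound lemma gives equidistribution of symbolic periodic points $\lambda^{-n}\sum\delta\to\widetilde\mu$ on the mixing component; this is a Gurevich/Sarig result, proved via the lower bound on cylinders. Projecting, $\liminf\lambda^{-n}\sum_{x\in P_{n,\chi}}\delta_x\ge\mu$ on open sets. Combined with the total mass bound $\#Q_n\le\#P_n\le\lambda^n+o(\lambda^n)$, the weak limits have mass at most $1$ and dominate $\mu$, so they equal $\mu$. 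This gives \eqref{eq:equidist} for every intermediate family $Q_n$.

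The main obstacle is the compatibility step: making sure the coding of the natural extension of a noninvertible meromorphic map, after Buzzi's recoding, is genuinely injective on periodic orbits and sends them to isolated simple periodic points of $f$ with the stated eigenvalue gaps. This has to handle points near $\cS$, where adaptedness gives only subexponential control.
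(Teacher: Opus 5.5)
Your architecture matches the paper's: de Th\'elin's exponent bounds, the Lima--Obata--Poletti coding of the natural extension plus Buzzi's injective recoding, the bound $\#\Fix(\sigma^n)\le\#P_n(f)$, Gurevich's theorem and the lower-bound lemma for uniqueness and aperiodicity, Sarig for Bernoulli, and symbolic equidistribution pushed forward and closed by a mass argument. The step you flag as the main obstacle is actually immediate: $\widetilde\Pi$ is injective, and an $\widehat f$-periodic history is determined by its zeroth coordinate.

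The genuine gap is your claim that every periodic point of $\sigma$ lands in $P_{n,\chi}(f)$ because its orbit is $\chi$-hyperbolic. The coding gives an invariant splitting $E^s_z\oplus E^u_z$ with contraction estimates. This makes $\Pi(z)$ a hyperbolic (hence isolated, simple) fixed point of $f^n$, but it says nothing about $\dim E^u_z$. The coding at level $\chi$ carries every adapted $\chi$-hyperbolic measure, of any index, and Buzzi's recoding keeps them. So $\Sigma$ can contain periodic points projecting to saddles of index $\neq p$, and your claim is false for $\Sigma$ as a whole.

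For (1)--(2) this is harmless, since only $\#\Fix(\sigma^n)\le\#P_n(f)$ is used. But (3) with $Q_n=P_{n,\chi}(f)$ needs $\Pi(\Fix(\sigma^n|_{\Sigma(\cC)}))\subset P_{n,\chi}(f)$ on the component $\Sigma(\cC)$ carrying the lift $m$ of $\mu$. Your projected $\liminf$ lower bound rests entirely on this inclusion.

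The missing argument is the paper's \cref{lem:stable-index}:
\begin{enumerate}
\item For $m$-a.e.\ $z$, forward contraction on $E^s_z$ and backward contraction on $E^u_z$ force $E^s_z\subset E^s_{\mathrm{Os}}$ and $E^u_z\subset E^u_{\mathrm{Os}}$.
\item A dimension count then gives equality, so $\dim E^u_z=p$ almost everywhere by de Th\'elin.
\item Continuity of $z\mapsto E^{s/u}_z$ together with $\supp m=\Sigma(\cC)$ propagates this to every point of $\Sigma(\cC)$, including its periodic points.
\end{enumerate}

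Note also that \eqref{eq:coding-local-contraction} only gives rate $\chi/2$. So the eigenvalue moduli are bounded by $e^{\pm n\chi_0/2}$, not $e^{\pm n\chi}$, and the constant in (3) must be taken strictly smaller; the paper uses $\chi_0/4$.
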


\begin{remark}\label{rmk:sharper-multipliers}
    In our setting, de Th\'elin \cite{deThelin08Invent} shows that $\mu$ as above has exactly $p$ positive and $k-p$ negative Lyapunov exponents. The multiplier bounds in (3) admit a refinement in terms of the Lyapunov exponents of $\mu$. Set $\chi^+:=\chi_p(\mu)>0$ and $\chi^-:=\chi_{p+1}(\mu)<0$. For $0<\vep<\min\{\chi^+,-\chi^-\}$, let $P_{n,\vep}(f)$ consist of the periodic points in $P_n(f)$ with $p$ multipliers of modulus greater than $e^{n(\chi^+-\vep)}$ and $k-p$ multipliers of modulus less than $e^{n(\chi^-+\vep)}$. Then \eqref{eq:equidist} also holds for every sequence $P_{n,\vep}(f)\subset Q_n\subset P_n(f)$. We omit the proof of this refinement for brevity.
\end{remark}

\begin{remark}
    For holomorphic automorphisms with simple action on cohomology, our criterion recovers the equidistribution result established in \cite{LZ26} within a more general framework. The uniqueness of MME in this setting was previously established by de Thélin–Dinh \cite{dTH-Dinh12AM}.
\end{remark}

The arguments in the proof of \cref{thm:main}, together with the latest work of Hu \cite{Hu26}, also give the following finiteness result for dominant meromorphic maps, only assuming cohomological hyperbolicity.

\begin{proposition}\label{prop:finite MMEs}
    Let $f\colon X\dashrightarrow X$ be a dominant meromorphic map of a compact connected K\"ahler manifold of complex dimension $k$. Suppose that $\lambda:=d_p(f)>\max_{j\ne p}d_j(f)$ for some $1\le p\le k-1$. Then $\# P_n(f)=O(\lambda^n)$, and there are only finitely many adapted ergodic invariant probability measures $\nu$ satisfying $h_\nu(f)=\log\lambda$.
\end{proposition}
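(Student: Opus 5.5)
The proposition has two parts: the bound $\#P_n(f)=O(\lambda^n)$, and finiteness of adapted ergodic measures of maximal entropy. The plan is to prove the first from complex geometry and then feed it into the symbolic machinery behind \cref{thm:main}.

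\textbf{Step 1: the periodic-point bound.} I would take this from Hu's recent work \cite{Hu26}. The idea is to count isolated fixed points of $f^n$ by intersecting the closure of the graph $\Gamma_n$ of $f^n$ with the diagonal $\Delta\subset X\times X$. The number of isolated intersection points, counted with multiplicity, is bounded by a cohomological quantity, obtained for instance by regularizing $[\Delta]$ or via a dynamical intersection theory for non-proper intersections. That quantity is controlled by $\sum_j \|(f^n)^*|_{H^{j,j}}\|$. Under cohomological $p$-hyperbolicity, the term $d_p^n$ dominates all the others, so each summand is $O(\lambda^n)$. The nonlinear issues coming from indeterminacy (the failure of $(f^n)^*=(f^*)^n$) would be absorbed by the estimates of \cite{DS05Annals} for degrees of iterates, which give $\|(f^n)^*\|_{H^{j,j}}\le C_\vep (d_j+\vep)^n$. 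I would take Hu's statement as the input here rather than reprove it.

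\textbf{Step 2: passing to a countable Markov shift.} Fix any finite collection $\nu_1,\dots,\nu_N$ of distinct adapted ergodic measures with $h_{\nu_i}(f)=\log\lambda$. By de Th\'elin's result each $\nu_i$ is hyperbolic, with $p$ positive and $k-p$ negative exponents. Choose $\chi>0$ below the exponents of all of them; this uniform choice is needed later. Apply the Lima--Obata--Poletti coding to the natural extension, then Buzzi's injective recoding, exactly as in the proof of \cref{thm:main}. This yields a countable Markov shift $(\Sigma,\sigma)$ and a H\"older factor map $\pi$ with the following properties: $\pi$ is injective on periodic orbits and sends symbolic $n$-periodic points to isolated periodic points in $P_n(f)$. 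Each $\nu_i$ lifts to an ergodic $\sigma$-invariant measure $\widehat\nu_i$ with the same entropy $\log\lambda$.

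\textbf{Step 3: symbolic counting.} Each $\widehat\nu_i$ is carried by an irreducible component $\Sigma_i$ of $\Sigma$. On an irreducible countable Markov shift there is at most one measure of maximal entropy, so distinct $\nu_i$ lie on distinct components. Moreover each such component has Gurevich entropy $\log\lambda$ and is positively recurrent. By the lower bound in \cref{lemma:lower bound}, for a period $q_i$ of $\Sigma_i$ we get
\[
\#\{\text{$n$-periodic points of }\Sigma_i\}\ge c_i\lambda^n \quad\text{for } q_i\mid n,\ n \text{ large}.
\]
Since the components are disjoint and $\pi$ is injective on periodic points, taking $n$ a common multiple of the $q_i$ gives
\[
\#P_n(f)\ge \Big(\sum_{i=1}^N c_i\Big)\lambda^n .
\]
Comparing with Step 1 bounds $\sum_i c_i$ above by $C:=\limsup_n\lambda^{-n}\#P_n(f)$.

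\textbf{Main obstacle.} Step 3 only bounds $N$ if the constants $c_i$ are bounded below uniformly, for example $c_i\ge 1$. The difficulty is that the natural lower bound is $\liminf \lambda^{-n}\#\mathrm{Per}_n(\Sigma_i)\ge q_i$ along multiples of $q_i$, which is what \cref{lemma:lower bound} should give for positively recurrent components. I would try to prove exactly this form. It suffices as long as every $q_i\ge1$ and the $n$-periodic points of different components are counted separately, which gives $N\le C$.

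If that uniform form fails, the fallback is an argument by contradiction. Suppose there were infinitely many such measures. The coding with a fixed $\chi$ covers all measures whose exponents lie outside $[-\chi,\chi]$. For measures of entropy $\log\lambda$, the exponents are bounded away from $0$ uniformly by de Th\'elin-type estimates; specifically, $\chi^+\ge\frac12\log(\lambda/\max_{j\ne p}d_j)$. Hence a single coding captures all of them, and the counting argument applies to any $N$ of them at once, contradicting the finite bound $C$.
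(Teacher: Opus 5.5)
This is correct and is essentially the paper's proof. Hu's bound $\|[\Gamma_n]\|=O(\lambda^n)$, combined with an intersection estimate for $\Gamma_n\cap\Delta$ (the paper uses Vu's Proposition~3.1), gives $\#P_n(f)\le C\lambda^n$; then the common coding, the fact that this upper bound forces each carrying component to have Perron value $\lambda$ (so the components are distinct and disjoint), and \cref{lemma:lower bound} along a common multiple of the periods yield $N\le\sum_i q_i\le C$, exactly as in the paper. Your ``main obstacle'' is already resolved by \cref{lemma:lower bound} as stated ($\liminf\ge q_i\ge 1$), so the fallback is unnecessary, and it would not fix non-uniform constants $c_i$ anyway; also, the Dinh--Sibony bounds $C_\varepsilon(d_j+\varepsilon)^n$ alone do not control the $j=p$ term, which is exactly why Hu's result is needed.
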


The following proposition establishes the upper bound \eqref{eq:criterion-upper} in each case, allowing us to apply \cref{thm:main}. Further background on these maps is given in \cref{sec:applications}. Recall that, for $0\leq p\leq k$, we call $f$ \textit{algebraically $p$-stable} if $(f^n)^*=(f^*)^n$ on $H^{p,p}(X,\C)$ for every $n\geq 1$.

\begin{proposition}\label{prop:applications}
Let $f\colon X\dashrightarrow X$ be a dominant meromorphic map of a compact connected K\"ahler manifold of complex dimension $k$. Suppose that $\lambda:=d_p(f)>\max_{j\ne p}d_j(f)$ for some $1\le p\le k-1$, and that one of the following conditions holds:
\begin{enumerate}[label=\textup{(\arabic*)}]
    \item $k=2$ and $f$ is algebraically $1$-stable;
    \item $X=\Pb^k$ and $f$ is algebraically $p$-stable.
\end{enumerate}
Then the isolated periodic points, counted with multiplicity, satisfy $\#P_n(f)\leq\lambda^n+o(\lambda^n)$. Consequently, all the conclusions of \cref{thm:main} hold for $f$.
\end{proposition}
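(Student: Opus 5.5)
The plan is to bound $\#P_n(f)$ by a Lefschetz-type count and then compare the resulting cohomological quantity with $\lambda^n$. First I would handle the isolated periodic points through the graph of $f^n$. Let $\Gamma_n\subset X\times X$ be the closure of the graph of $f^n$ over $X\setminus I(f^n)$, and let $\Delta$ be the diagonal. Every isolated $n$-periodic point $x$ gives a point $(x,x)$ that is an isolated point of $\Gamma_n\cap\Delta$. Its multiplicity as a fixed point of $f^n$ equals the local intersection multiplicity of $\Gamma_n$ and $\Delta$ there, because near $(x,x)$ the graph is smooth and parametrized by $x$.

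The excess intersection theory of Fulton, in its positivity form used by Saito, Iwasaki--Uehara and Diller--Jackson, says that distinguished varieties contribute nonnegatively when the ambient space is homogeneous. This applies to $\Pb^k\times\Pb^k$, and for surfaces one can use the argument of Iwasaki--Uehara or Dinh--Nguy\^en--Truong, whose nonnegativity comes from the $\mathbb{Q}$-nefness relevant for $k=2$. With this input we get
\[
\#P_n(f)\le \{\Gamma_n\}\cdot\{\Delta\}=\sum_{j=0}^k \operatorname{tr}\bigl((f^n)^*|_{H^{j,j}}\bigr)\quad\text{(up to sign conventions on odd degrees)}.
\]
In case (2), $H^{j,j}(\Pb^k)$ is one-dimensional and $\{\Delta\}=\sum_j h^j\otimes h^{k-j}$, so the intersection number equals $\sum_{j}\deg_j(f^n)$. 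Here $\deg_j(f^n)$ is the $j$-th algebraic degree of $f^n$.

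In case (1), $X$ is a surface, and I would use Künneth with the Lefschetz formula. The odd-degree classes and the $(2,0)$ and $(0,2)$ classes contribute traces of order $O(\sqrt{d_2})$, or of order at most $d_1^{n/2}$-type growth; in any case these are $o(\lambda^n)$ because of the strict dominance of $\lambda=d_1$. The $(1,1)$ contribution is $\operatorname{tr}((f^n)^*|_{H^{1,1}})$. By algebraic $1$-stability this equals $\operatorname{tr}((f^*)^n)$, and since $\lambda$ is a simple dominant eigenvalue (Diller--Favre), this trace is $\lambda^n+o(\lambda^n)$. The terms $j=0,2$ contribute $1+d_2^n=o(\lambda^n)$.

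For case (2), algebraic $p$-stability gives $\deg_p(f^n)=\deg_p(f)^n=\lambda^n$. For $j\ne p$, submultiplicativity gives $\deg_j(f^n)\le C_\epsilon(d_j+\epsilon)^n=o(\lambda^n)$ once $\epsilon$ is small enough. Summing these estimates yields the claim.

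The main obstacle is the nonnegativity of the contributions from the non-isolated components of $\Gamma_n\cap\Delta$, which include curves of periodic points and indeterminacy images. For $\Pb^k$ this follows because $T(\Pb^k\times\Pb^k)$ is globally generated, so the Fulton--Lazarsfeld positivity applies. For surfaces I would instead invoke the known inequality of Iwasaki--Uehara and Saito. The final clause then follows by applying \cref{thm:main}.
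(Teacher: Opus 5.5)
\textbf{Case (2).} Your argument here is correct. It differs from the paper only in how you justify the key inequality $\#P_n(f)\le\{\Gamma_n\}\cdot\{\Delta\}=\sum_j\delta_j(f^n)$. You invoke Fulton's positivity of contributions for the regular embedding $\Delta\subset\Pb^k\times\Pb^k$, whose normal bundle $T\Pb^k$ is globally generated. The paper instead moves the diagonal to $\Delta_A$ with $A\in\operatorname{PGL}(k+1,\C)$ near the identity, gets properness from a dimension count on the incidence variety, and uses conservation of local intersection multiplicities. Both routes rest on the homogeneity of $\Pb^k$. The rest is identical: $p$-stability gives $\delta_p(f^n)=\lambda^n$, and the definition of $d_j$ gives $\delta_j(f^n)=o(\lambda^n)$ for $j\ne p$.

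\textbf{Case (1).} This is where the gap is. You again assert $\#P_n(f)\le\{\Gamma_n\}\cdot\{\Delta\}$, now on an arbitrary compact K\"ahler surface, and then only compute the right-hand side via K\"unneth. That inequality says every non-isolated component of $\Gamma_n\cap\Delta$ contributes nonnegatively, and without homogeneity this fails in general. If $f^n$ fixes a smooth curve $C$ pointwise with transverse multiplier $\neq 1$, the excess-intersection contribution of $C$ is $c_1(TX)\cdot C-C^2=-K_X\cdot C-C^2=2-2g(C)$, which is negative once $g(C)\ge 2$. Already the identity of $C\times\Pb^1$ with $g(C)\ge2$ has no isolated fixed points, yet $\{\Gamma\}\cdot\{\Delta\}=\chi(C\times\Pb^1)<0$.

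Your argument never uses $d_1>d_2$ or algebraic stability to rule out or bound such terms. The sources you name do not supply a general positivity statement either:
\begin{itemize}
\item Saito's and Iwasaki--Uehara's fixed point formulas compute curve contributions explicitly, and these can be negative.
\item Iwasaki--Uehara's asymptotic count needs extra hypotheses (birational, area-preserving maps of projective surfaces).
\item The appeal to ``$\mathbb{Q}$-nefness'' is not an argument here, since $TX$ need not be nef.
\end{itemize}

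The paper avoids this entirely by citing \cite[Theorem~1.2]{DNT17BLMS}, which proves $\#P_n(f)\le\lambda^n+o(\lambda^n)$ directly for algebraically stable maps of compact K\"ahler surfaces with $d_1(f)>d_2(f)$. With that citation your Lefschetz/K\"unneth computation in case (1) becomes unnecessary. Without it, case (1) rests on an inequality that is unproved and false in the generality you state it.
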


The known constructions of MMEs in these settings, together with the precise hypotheses under which our criterion applies, are discussed in \cref{sec:applications}. Below we explain how \cref{prop:applications} relates to earlier results and addresses several longstanding open problems.
\medskip


\noindent\textbullet For the meromorphic surface maps considered by Diller--Dujardin--Guedj \cite{DDG3ENS}, the new conclusion is uniqueness among adapted invariant MMEs. Under their hypotheses \textup{(H1)--(H3)}, this answers the question in \cite[p.~239]{DDG3ENS}; see also \cite[Section~1.4]{Dujardin22}. The Bernoulli property of the natural extension was obtained in that work. Moreover, their Theorem~C, together with \cite[Corollary~1.3]{DNT17BLMS}, gives equidistribution of all saddle periodic points. Our argument gives a new proof for these conclusions with a stronger statement in \cref{thm:main}-(3).
\medskip

\noindent\textbullet Consider the birational maps of $\Pb^k$ studied by de Th\'elin--Vigny \cite{dTH-Vigny}. Their construction of equilibrium measures extends the work of Bedford--Diller on birational surface maps \cite{BD05Duke} to higher dimensions. Our equidistribution of saddle periodic points answers the question raised in \cite[Remark~1.0.1]{dTH-Vigny}. In dimension two, this equidistribution was already established by Dujardin under the Bedford--Diller condition \cite{Dujardin06duke}. Our uniqueness result answers the question raised in \cite[Remark~3.4.20]{dTH-Vigny} within the class of adapted invariant probability measures.

\medskip
\cref{Markov} is a brief introduction to countable Markov shift and its basic properties. \cref{sec:symbolic} introduces the natural extension and the main results from symbolic dynamics. \cref{sec:proof-main} proves \cref{thm:main}. The applications to some specific classes of maps are proved in \cref{sec:applications}. The assumptions needed for \cref{thm:mme-coding} are verified in \hyperref[app:estimates]{Appendix}.

\medskip
\noindent\textbf{Acknowledgement.} The second author is supported by the President's Graduate Fellowship.
\medskip

\noindent\textbf{Declaration of AI use.} During the preparation of \cite{LZ26}, whose main idea was suggested by GPT-6 Astra, the authors observed that the same method could also be used to prove uniqueness of measures of maximal entropy, and that the results of \cite{LOP26JEMS,ALP24MAMS} allow the approach to extend beyond the holomorphic setting to meromorphic maps, leading to the present paper. The authors prepared the manuscript with AI assistance in language editing and take full responsibility for all mathematical content.

\section{Countable Markov shifts}\label{Markov}

\subsection{Basic notions}

We recall basic results of countable Markov shifts. All results mentioned here can be found in \cite[Chapter 7]{Kitchens} and \cite[Chapter 1]{Durrett99ESP}. Let $\cG=(\cV, E)$ be a directed graph with a countable collection of vertices $\cV$ such that every vertex has at least one ingoing and one outgoing edge in $E$. We always assume that there is at most one directed edge from \(u\) to \(v\) for every ordered pair of vertices \((u,v)\). We write $u\to v$ when such an edge exists. The \textit{adjacency matrix} is defined to be the countable matrix $A$ such that the vertices of the graph are the indices for the rows and columns of $A$, and for any two vertices $u,v$, 
\[
    A_{uv}=
    \begin{cases}
        1, & u\to v;\\
        0, & \text{otherwise}.
    \end{cases}
\]
The \textit{countable Markov shift} associated to $\cG$ is the set
 \[
    \Sigma=\{(v_i)_{i\in\Z}\in\cV^\Z: A_{v_iv_{i+1}}=1,\ \forall i\in\Z\}.
\]
equipped with the left-shift
\[
    \sigma \colon \Sigma \to \Sigma, \qquad \sigma\bigl((v_i)_{i\in\mathbb{Z}}\bigr)= (v_{i+1})_{i\in\mathbb{Z}},
\]
and the metric
\[
    d(u,v):= \exp\left(-\min\left\{n\in\mathbb{N}_0 :u_n\neq v_n \text{ or } u_{-n}\neq v_{-n}\right\}\right)
\]
for \(u=(u_n)_{n\in\mathbb{Z}}\), \(v=(v_n)_{n\in\mathbb{Z}}\). With this metric \(\Sigma\) is a complete separable metric space, i.e., a metric Polish space. 

A \textit{path} of length $n$ is a sequence of vertices $(u_0,u_1,...,u_n)$ such that $u_i\to u_{i+1}$ for all $0\leq i\leq n-1$. We will write $u_0\to u_1\to \cdots \to u_n $ for such a path. This path is called a \textit{loop} if $u_0 = u_n$. We can consider the product of $A$ with itself by the usual law of matrix product. In particular, $(A^n)_{ab}$ is the number of paths $(u_0,u_1,...,u_n)$ such that $u_0=a$ and $u_n=b$. By convention, $(A^n)_{ab}$ takes $\infty$ if there are infinitely many paths of length $n$ from $a$ to $b$. For integers $m,n$ with $n\geq 0$, and an admissible path $u_0\to u_1\to \cdots \to u_n $, define
\[
    [u_0,\ldots,u_n]_m:=\{x\in\Sigma:x_j=u_{j-m}\text{ for }m\leq j \leq n+m \}.
\]

\begin{definition}
Two vertices $u,v\in\mathcal V$ \emph{communicate} if there are paths from $u$ to $v$ and from $v$ to $u$, allowing paths of length zero. The equivalence classes of this relation are called the \emph{strongly connected components} of $\mathcal G$. The Markov shift $\Sigma$ is \emph{irreducible} if $\mathcal V$ is a single strongly connected component and contains a loop of positive length; equivalently, for every $u,v\in\mathcal V$, there exists $n\geq 1$ such that $(A^n)_{uv}>0$. 
\end{definition}

A \emph{Markov subshift} of $\Sigma=\Sigma(\mathcal G)$ is a shift $\Sigma(\mathcal G')$ associated with a subgraph $\mathcal G'\subset\mathcal G$ satisfying the same assumptions as $\mathcal G$. 

\begin{definition}
The \emph{period} of an irreducible Markov shift $\Sigma$ is
\[
    q:=\gcd\{n\geq1:(A^n)_{uu}>0\},
\]
where $u\in\mathcal V$ is arbitrary. This value is independent of the choice of $u$. The shift $\Sigma$ is \emph{aperiodic} if $q=1$.
\end{definition}

Every ergodic shift-invariant probability measure on $\Sigma$ is supported on an irreducible Markov subshift, as shown by the following lemma. This is already mentioned in the works \cite{Sarig13JAMS, Katok07JMD,Buzzi09ETDS,Ovadia18JMD}. For a complete proof, see for instance \cite{LZ26}.

\begin{lemma}\label{lem:ergodic-component}
Let $(\Sigma,\sigma)$ be a countable Markov shift with alphabet $\mathcal V$, and let $\nu$ be an ergodic $\sigma$-invariant Borel probability measure. Then there is a strongly connected component $\mathcal C$ containing a loop of positive length such that
\[
    \supp(\nu)\subset\Sigma({\mathcal C}),
\]
where $\Sigma(\mathcal{C})$ is the irreducible Markov subshift associated with $\mathcal{C}$. Moreover, every symbol $a$ with $\nu([a]_0)>0$ belongs to $\mathcal C$.
\end{lemma}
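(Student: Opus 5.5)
We will prove \cref{lem:ergodic-component} by using ergodicity to show that $\nu$-almost every point has all of its symbols in one fixed strongly connected component.

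\emph{Step 1: choose a recurrent symbol.} Since the cylinders $[a]_0$, $a\in\mathcal V$, partition $\Sigma$ and $\nu$ is a probability measure, some symbol $a$ has $\nu([a]_0)>0$. Let $\mathcal C$ be the strongly connected component containing $a$. By Poincaré recurrence, $\nu$-almost every $x\in[a]_0$ returns to $[a]_0$ at some time $n\geq1$. The segment $x_0\to\cdots\to x_n$ is then a loop of positive length through $a$, and all its vertices lie in $\mathcal C$. Hence $\mathcal C$ contains a loop of positive length, and $\Sigma(\mathcal C)$ is an irreducible Markov subshift. Its graph has each vertex with an in-edge and an out-edge inside $\mathcal C$, since any two vertices of $\mathcal C$ lie on a common loop inside $\mathcal C$.

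\emph{Step 2: almost every point visits $a$ infinitely often in both directions.} Ergodicity, together with the recurrence of $\sigma$ and $\sigma^{-1}$ (both preserve $\nu$), gives this for $\nu$-almost every $x$. For instance, the set of points visiting $[a]_0$ in the future is forward-invariant up to null sets and has positive measure, so it has full measure. Intersecting countably many full-measure sets gives a full-measure set $G$ of points $x$ with $x_i=a$ for infinitely many positive and infinitely many negative $i$.

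\emph{Step 3: every coordinate lies in $\mathcal C$.} Fix $x\in G$ and $j\in\Z$. Choose $i<j<l$ with $x_i=x_l=a$. The path $x_i\to\cdots\to x_j\to\cdots\to x_l$ shows that $x_j$ is reachable from $a$ and that $a$ is reachable from $x_j$. Hence $x_j\in\mathcal C$, so $x\in\Sigma(\mathcal C)$. Thus $\nu(\Sigma(\mathcal C))=1$. Since $\Sigma(\mathcal C)$ is closed in $\Sigma$ (it is defined by coordinatewise conditions, each of which is clopen), $\supp(\nu)\subset\Sigma(\mathcal C)$.

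\emph{Step 4: the final claim.} If $\nu([b]_0)>0$, then $[b]_0\cap G\neq\varnothing$. Any $x$ in this intersection has $x_0=b$, and by Step 3 this forces $b\in\mathcal C$.

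\emph{Main obstacle.} The only delicate point is Step 2, extracting two-sided infinite visits from ergodicity. We would handle it by applying Birkhoff's theorem to $\mathbf 1_{[a]_0}$ for both $\sigma$ and $\sigma^{-1}$: the limiting frequency equals $\nu([a]_0)>0$ almost surely in each direction, which directly gives infinitely many visits.
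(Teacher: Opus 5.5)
Your proof is correct and complete. The paper gives no argument of its own for \cref{lem:ergodic-component}; it defers to \cite{Sarig13JAMS,Katok07JMD,Buzzi09ETDS,Ovadia18JMD} and to \cite{LZ26}, so there is no in-paper proof to match.

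The usual argument in those sources runs slightly differently. One shows that any two symbols $a,b$ with $\nu([a]_0),\nu([b]_0)>0$ communicate: ergodicity gives $n\geq1$ with $\nu([a]_0\cap\sigma^{-n}[b]_0)>0$, and symmetrically in the other direction. One then notes that $\nu$-almost every point uses only such symbols, because the exceptional set is a countable union of null cylinders $[c]_j$.

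Your version instead anchors everything to a single symbol $a$ and uses two-sided infinite returns to $a$. This gives $\nu(\Sigma(\mathcal C))=1$ and the final claim about positive-measure symbols in one stroke, without first isolating the set of positive-measure symbols. The cost is the two-sided recurrence step, which you handle correctly: $A=\bigcup_{n\geq0}\sigma^{-n}[a]_0$ satisfies $\sigma^{-1}A\subset A$ with equal measure, so it is invariant mod $0$, and the same holds for $\sigma^{-1}$.

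Step 1's separate appeal to Poincaré recurrence is redundant. Any $x\in G\cap[a]_0$, together with its next return to $a$, already supplies the loop of positive length. Your checks that the induced subgraph on $\mathcal C$ has in- and out-edges at every vertex, and that $\Sigma(\mathcal C)$ is closed, are exactly the details needed for the support statement.
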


Following Sarig \cite{Sarig13JAMS}, we define the \emph{regular part} of a countable Markov shift $\Sigma$ by
\begin{equation}\label{eq:regular}
    \Sigma^\#:=\left\{(v_i)_{i\in\mathbb{Z}}\in\Sigma : \exists\,v',w'\in\mathcal{V}(\Sigma),\ \exists\,n_k,m_k\uparrow\infty\ \text{s.t.}\  v_{n_k}=v', \  v_{-m_k}=w',\  \forall\,k\geq 1 \right\}.
\end{equation}
By the Poincaré recurrence theorem, every \(\sigma\)-invariant probability measure $\mu$ is carried by \(\Sigma^\#\), i.e., $\mu(\Sigma^{\#})=1$. Furthermore, every periodic point of $\sigma$ is in $\Sigma^\#$. Note that $\Sigma^\#$ is not necessarily a closed subset of $\Sigma$.

\subsection{Positive recurrence and entropy}\label{subsec:recurrent}

\begin{definition}
    For an irreducible countable Markov shift $\Sigma$, the \textit{Perron value} of $\Sigma$ is defined to be
\[
    \lambda:={\limsup}_{n\to\infty}((A^n)_{uu})^{1/n},
\]
for any $u\in\cV$. It is independent of the choice of $u$ and takes values in $[1, \infty]$.
\end{definition}

We now discuss recurrence for an irreducible countable Markov shift $\Sigma$ with adjacency matrix $A$ and finite Perron value $\lambda$. Then $(A^n)_{uv}<\infty$ for all vertices $u,v$ and all $n\geq 1$. With the convention $A^0=I$, for $n\geq 0$, define $a_{uv}(n):=(A^n)_{uv}$. For $n\geq 1$, define $l_{uv}(0)=0 $, $l_{uv}(1)=A_{uv}$, and $l_{uv}(n+1):=\sum_{w\neq u} l_{uw}(n)A_{wv}$. The coefficient $l_{uv}(n)$ counts the paths from $u$ to $v$ of length $n$ that do not return to $u$ before time $n$.

\begin{definition}
     An irreducible countable Markov shift $\Sigma$ with finite Perron value $\lambda$ is \textit{recurrent} if for some $u\in\cV$, $\sum_{n=1}^{\infty} a_{uu}(n)/\lambda^n= \infty$. It is \textit{positive recurrent} if it is recurrent and $\sum_{n=1}^\infty nl_{uu}(n)/\lambda^n<\infty$. Both definitions are independent of the choice of $u$.
\end{definition}

The following is the Perron-Frobenius theorem for countable Markov shifts. For reference, see \cite[Theorem 7.1.3]{Kitchens}.
\begin{lemma}\label{Perron-Frobenius}
    Let $\Sigma$ be an irreducible countable Markov shift with finite Perron value $\lambda$. If $\Sigma$ is recurrent, $\lambda$ has strictly positive left and right eigenvectors $r=(r_u)$ and $\ell=(\ell_u)$ with $rA=\lambda r$ and $A\ell=\lambda \ell$. And the eigenvectors are unique up to constant multiples. Moreover, if $\Sigma$ is positive recurrent, we have that $r\cdot \ell=\sum_u r_u\cdot \ell_u<\infty$.
\end{lemma}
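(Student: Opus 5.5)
We follow the classical renewal-theoretic construction in \cite[Chapter~7]{Kitchens}. Fix a vertex $u\in\cV$ and introduce the \emph{taboo} path counts. Let $l_{uv}(n)$ be as above, i.e.\ paths $u\to\cdots\to v$ of length $n$ avoiding $u$ at times $1,\dots,n-1$. Let $l'_{vu}(n)$ be the dual count: paths $v\to\cdots\to u$ of length $n$ avoiding $u$ at times $0<j<n$. Form the generating functions
\[
    F(z):=\sum_{n\ge1}l_{uu}(n)z^n,\qquad G(z):=\sum_{n\ge0}a_{uu}(n)z^n.
\]
Decomposing a loop at $u$ according to its successive returns to $u$ gives the renewal identity $G(z)=1/(1-F(z))$ for $0\le z<1/\lambda$. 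The radius of convergence of $G$ is $1/\lambda$. Hence $F(z)<1$ on $[0,1/\lambda)$, and by monotone convergence $F(1/\lambda)\le1$. Recurrence means $G(1/\lambda)=\infty$, which forces $F(1/\lambda)=1$. This identity is the engine of the whole proof.

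\emph{Existence.} Set $r_u:=1$ and $r_v:=\sum_{n\ge1}l_{uv}(n)\lambda^{-n}$ for $v\ne u$. Define $\ell_u:=1$ and $\ell_v:=\sum_{n\ge1}l'_{vu}(n)\lambda^{-n}$ for $v\ne u$.

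For finiteness, choose by irreducibility a path $v\to\cdots\to u$ of length $m$ avoiding $u$ before its end. Concatenating it with the paths counted in $l_{uv}(n)$ gives $l_{uv}(n)\lambda^{-n}\lambda^{-m}\le l_{uu}(n+m)\lambda^{-(n+m)}$. Summing over $n$ yields $r_v\le\lambda^mF(1/\lambda)<\infty$.

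For positivity, $r_v>0$ because some path from $u$ to $v$ can be truncated after its last visit to $u$.

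To check $rA=\lambda r$, use the one-step recursion $l_{uv}(n+1)=\sum_{w\ne u}l_{uw}(n)A_{wv}$ together with $l_{uv}(1)=A_{uv}$. This gives
\[
    (rA)_v=A_{uv}+\sum_{w\ne u}r_wA_{wv}=\lambda\sum_{n\ge1}l_{uv}(n)\lambda^{-n}.
\]
For $v\ne u$ the right-hand side is $\lambda r_v$. For $v=u$ it equals $\lambda F(1/\lambda)=\lambda=\lambda r_u$.

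The argument for $\ell$ is symmetric, decomposing paths by the first hitting time of $u$.

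\emph{Uniqueness.} This is the step I expect to require the most care. Let $x\ge0$, $x\ne0$, satisfy $xA=\lambda x$. Irreducibility gives $x_u>0$; normalize $x_u=1$. Iterate $x_v=\lambda^{-1}\sum_wx_wA_{wv}$, at each step splitting off the $w=u$ term and re-expanding the rest. After $N$ steps this gives
\[
    x_v\ \ge\ \sum_{n=1}^{N}l_{uv}(n)\lambda^{-n},
\]
so $x\ge r$ entrywise. Then $y:=x-r\ge0$ satisfies $yA=\lambda y$ and $y_u=0$. Irreducibility then forces $y\equiv0$: if $y_v>0$ and $(A^n)_{vu}>0$, then $0=\lambda^n y_u\ge y_v(A^n)_{vu}>0$, a contradiction. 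The same argument applies to right eigenvectors. The delicate point is justifying the iteration with nonnegative infinite sums; this is fine since every term is nonnegative (Tonelli).

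\emph{Positive recurrence.} With the normalization $r_u=\ell_u=1$, compute
\[
    \sum_v r_v\ell_v=\sum_v\sum_{m,j}l_{uv}(m)\,l'_{vu}(j)\,\lambda^{-(m+j)},
\]
where the terms with $v=u$ are interpreted via $r_u=\ell_u=1$. A first-return loop at $u$ of length $n$ splits uniquely at each intermediate time $0\le m<n$ into such a pair. Hence the sum equals $\sum_n n\,l_{uu}(n)\lambda^{-n}$, which is finite by positive recurrence. For arbitrary eigenvectors the product changes only by a positive constant, by uniqueness.
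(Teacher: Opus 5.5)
Your proof is correct, and it is the classical Vere-Jones renewal (first-return/taboo-sum) construction behind \cite[Theorem~7.1.3]{Kitchens}, which the paper cites instead of giving a proof. Restricting to $x\ge0$ in the uniqueness step is the right reading of the statement, since uniqueness fails among signed eigenvectors: on the recurrent shift over $\Z$ with edges $i\to i\pm1$, both $x_i=1$ and $x_i=i$ satisfy $Ax=2x$.
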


The \textit{Gurevich entropy} of the Markov shift $\Sigma$ associated with $\cG$ is given by
\[
     h(\Sigma):=\sup\{h_{top}(\Sigma(\cG'),\sigma):\cG' \text{ is a finite subgraph}\}.
\]
The variational principle is still valid for Markov shifts:
\begin{theorem}[\cite{Gurevich69}]\label{variational}
    For an irreducible countable Markov shift $\Sigma$ with Perron value $\lambda$, we have 
    \[
         h(\Sigma)=\sup_{\mu} h_\mu(\sigma)=\log \lambda,
    \]
    where $\mu$ runs over all invariant Borel probability measures. If any of the three quantities above is infinite, then so are the other two.
\end{theorem}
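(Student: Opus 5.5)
The variational principle for an irreducible countable Markov shift (Gurevich's theorem, stated just above) will be proved by establishing the cycle of inequalities
\[
h(\Sigma)\le \sup_\mu h_\mu(\sigma)\le \log\lambda\le h(\Sigma).
\]
Throughout, fix a vertex $u\in\cV$. Irreducibility makes the Perron value independent of $u$: given paths $v\to u$ and $u\to v$ of lengths $a$ and $b$, we have $a_{vv}(n+a+b)\ge a_{uu}(n)$, and the $\limsup$ of the $n$-th roots is unchanged.

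\emph{Step 1: $\log\lambda\le h(\Sigma)$.} For each $N$, consider the finite subgraph $\cG_N$ consisting of all vertices and edges that appear on loops of length at most $N$ through $u$. It is irreducible, and its Perron value is $\limsup_n (A_N^n)_{uu}^{1/n}$, which equals $e^{h_{top}(\Sigma(\cG_N))}$ by the classical finite Perron--Frobenius theorem. Every loop at $u$ of length $n$ splits into first-return loops. If $a_{uu}(m)\ge(\lambda-\vep)^m$ for some $m$, then all loops of length $m$ lie in $\cG_m$, so $(A_m^{jm})_{uu}\ge (A_m^m)_{uu}^j$ and the Perron value of $\cG_m$ is at least $\lambda-\vep$. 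This gives $\log\lambda\le h(\Sigma)$, and it also covers the case $\lambda=\infty$. If infinitely many paths of some fixed length exist, the same argument shows the finite-subgraph entropies are unbounded.

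\emph{Step 2: $h(\Sigma)\le\sup_\mu h_\mu$.} For each finite subgraph $\cG'$, the finite variational principle gives an invariant measure on $\Sigma(\cG')\subset\Sigma$ with entropy equal to $h_{top}(\Sigma(\cG'))$, namely the Parry measure. Taking the supremum over $\cG'$ yields the inequality.

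\emph{Step 3: $h_\mu(\sigma)\le\log\lambda$ for every invariant $\mu$.} This is the main obstacle, because the partition into cylinders $\{[a]_0\}$ is countable and may have infinite entropy. I would first reduce to ergodic $\mu$ via the ergodic decomposition. By \cref{lem:ergodic-component}, $\mu$ lives on an irreducible subshift whose Perron value is at most $\lambda$, since any loop in a subgraph is a loop in $\cG$. Then I would pass to the first-return (induced) map on $[u]_0$ for some $u$ with $\mu([u]_0)>0$. The induced system is a full shift on the countable alphabet of first-return loops, and Abramov's formula gives
\[
h_\mu(\sigma)=\mu([u]_0)\,h_{\mu_u}(\sigma_u).
\]
For a shift-invariant measure $\mu_u$ on this loop alphabet, let $\ell(\gamma)$ denote the length of the loop $\gamma$, and note that $\int \ell\,d\mu_u=1/\mu([u]_0)$ by Kac's lemma. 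The key estimate is a Gibbs-type inequality: for every $t>\log\lambda$,
\[
h_{\mu_u}-t\int\ell\,d\mu_u\le \log\sum_\gamma e^{-t\ell(\gamma)}=\log\sum_n l_{uu}(n)e^{-tn}.
\]
The right-hand side is at most $0$ once $t$ is slightly above $\log\lambda$, because the renewal identity $\sum_n a_{uu}(n)z^n=\bigl(1-\sum_n l_{uu}(n)z^n\bigr)^{-1}$ and the radius of convergence $1/\lambda$ force $\sum_n l_{uu}(n)\lambda^{-n}\le1$. This inequality holds even when $h_{\mu_u}=\infty$ is a priori possible: the one-step partition entropy is bounded by $t\int\ell\,d\mu_u$ plus $\log$ of the partition sum, which is finite. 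Multiplying by $\mu([u]_0)$ then gives $h_\mu\le t$, and letting $t\downarrow\log\lambda$ completes the step. The infinite cases follow from Steps 1 and 2.
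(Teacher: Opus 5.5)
The paper gives no proof of this statement. It is quoted from Gurevich \cite{Gurevich69} and used as a black box, for instance for the bound $h_\nu(\sigma)\le h(\Sigma(\cC))\le\log\lambda$ in \cref{lemma:unique-ergodic}. Your proposal is a correct, self-contained proof, so the only comparison is between a bare citation and your argument.

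Reading the cycle $h(\Sigma)\le\sup_\mu h_\mu(\sigma)\le\log\lambda\le h(\Sigma)$ in $[0,\infty]$ gives the equalities and the statement about infinite values at once.

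\emph{Step 1.} It works because $\cG_m$ contains every loop of length $m$ at $u$, so $(A_m^m)_{uu}=a_{uu}(m)$. Note that $\cG_m$ is finite only when $a_{uu}(n)<\infty$ for all $n\le m$. Your closing remark covers the other case: by irreducibility, any infinite $(A^n)_{vw}$ yields infinitely many loops at $u$ of one fixed length, hence $\lambda=\infty$.

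\emph{Step 3.} This is where the content lies, and it holds up.
\begin{enumerate}[label=\textup{(\alph*)}]
\item Inducing on $[u]_0$ realizes $\mu_u$ on the full shift over first-return loops.
\item Abramov and Kac reduce the claim to $h_{\mu_u}(\sigma_u)\le t\int\ell\,d\mu_u$.
\item The Gibbs inequality bounds the entropy of the one-symbol partition $\alpha$ by $t\int\ell\,d\mu_u+\log Z(t)<\infty$. So $\alpha$ is a generator of finite entropy, and Kolmogorov--Sinai gives $h_{\mu_u}(\sigma_u)=h_{\mu_u}(\sigma_u,\alpha)\le H_{\mu_u}(\alpha)$. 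This is exactly what justifies your remark that an a priori infinite induced entropy causes no trouble.
\item The renewal identity gives $Z(t)=\sum_n l_{uu}(n)e^{-tn}<1$ for every $t>\log\lambda$, not only for $t$ slightly above $\log\lambda$.
\end{enumerate}

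Two minor points. The appeal to \cref{lem:ergodic-component} is harmless but unnecessary, since irreducibility means the only strongly connected component is $\cV$ itself. In Step 2, if $\Sigma(\cG')$ is reducible, take the Parry measure on a component of maximal entropy, or simply invoke existence of an MME for compact subshifts of finite type.

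What each approach buys: the citation gives the paper brevity. Your route costs about a page but is built from exactly the quantities $a_{uu}(n)$ and $l_{uu}(n)$ that the paper already uses to define recurrence and positive recurrence. It is also the standard inducing scheme behind the variational principle for the Gurevich pressure of potentials.
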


The next theorem gives a necessary and sufficient condition for the existence of a measure of maximal entropy when the Markov shift is irreducible.

\begin{theorem}[\cite{Gurevich70}]\label{thm:MME-posi recurr}
     Let $\Sigma$ be an irreducible countable Markov shift with Perron value $\lambda< \infty$. Then, it admits a measure of maximal entropy if and only if $\Sigma$ is positive recurrent and such a measure is unique if it exists.
\end{theorem}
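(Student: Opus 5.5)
The plan is to prove Gurevich's theorem via the renewal structure at a fixed vertex. Fix $u\in\mathcal V$ and let $\lambda<\infty$ be the Perron value.

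\emph{Sufficiency of positive recurrence.} If $\Sigma$ is positive recurrent, \cref{Perron-Frobenius} supplies positive vectors $r,\ell$ with $rA=\lambda r$, $A\ell=\lambda\ell$ and $r\cdot\ell<\infty$. After normalising so that $r\cdot\ell=1$, define the Parry measure as the Markov measure with stationary vector $\pi_v:=r_v\ell_v$ and transition probabilities $P_{vw}:=A_{vw}\ell_w/(\lambda\ell_v)$.
\begin{itemize}
\item The eigen-equations show that $P$ is stochastic and that $\pi P=\pi$, so the measure $\mu_P$ is shift-invariant.
\item On a cylinder, $\mu_P([v_0,\dots,v_n])=r_{v_0}\ell_{v_n}\lambda^{-n}$.
\item Its entropy is $-\sum_v\pi_v\sum_wP_{vw}\log P_{vw}$. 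Using this cylinder formula, the sum telescopes to $\log\lambda$. This needs care when the entropy sum is infinite; I would handle it by computing $-\frac1n\int\log\mu_P([x_0,\dots,x_n])\,d\mu_P$ via the cylinder formula and the integrability of $\log\ell$ and $\log r$ relative to $\pi$, or via the Shannon–McMillan–Breiman theorem.
\end{itemize}
By \cref{variational}, $\mu_P$ is therefore an MME.

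\emph{Necessity.} Let $\nu$ be an MME. By ergodic decomposition we may take $\nu$ ergodic, and since $\Sigma$ is irreducible we may assume $\nu([u])>0$ (its support is a subshift; shrinking the graph changes nothing). Induce on $[u]$. The first-return map is a full shift on the countable alphabet of first-return loops at $u$, and there are $l_{uu}(n)$ such loops of length $n$.
\begin{itemize}
\item Abramov's formula gives $h_\nu=\nu([u])\,h_{\nu_u}(\text{induced})$.
\item Kac's lemma gives $\int\tau\,d\nu_u=1/\nu([u])$, where $\tau$ is the return time.
\item The variational inequality on the full shift with potential $-\tau\log\lambda$ gives $h_{\nu_u}-\log\lambda\int\tau\le\log\sum_n l_{uu}(n)\lambda^{-n}$.
\end{itemize}
Since $h_\nu=\log\lambda$, the left side is zero. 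Moreover, $F(t)=\sum_n l_{uu}(n)t^n$ satisfies $F(1/\lambda)\le1$, because the generating function of $a_{uu}$ is $1/(1-F)$ with radius of convergence $1/\lambda$. Hence $F(1/\lambda)=1$, which is recurrence, and equality holds in the variational inequality. Equality forces $\nu_u$ to be the Bernoulli measure with weights $\lambda^{-|w|}$ on loops $w$. Kac's lemma then gives $\sum_n nl_{uu}(n)\lambda^{-n}=\int\tau\,d\nu_u=1/\nu([u])<\infty$, which is positive recurrence.

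\emph{Uniqueness.} The argument above pins down $\nu_u$, and $\nu$ is recovered from $\nu_u$ by the standard tower construction. Hence every ergodic MME is the same measure, and therefore every MME is that measure.

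The main obstacle is the equality case of the variational principle for a countable full shift whose potential $-\tau\log\lambda$ need not be bounded. The first thing I would try is the direct argument: for $\nu_u$ with finite entropy, the Gibbs inequality is applied on finite partitions by first-return words, and then a limit is taken. The required integrability is supplied by $h_\nu<\infty$ and $\int\tau<\infty$.
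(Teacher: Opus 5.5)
The paper gives no proof of this statement. It quotes it from \cite{Gurevich70} and uses it as a black box, so your renewal argument is a genuinely independent, self-contained route.

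\textbf{Necessity.} This half is sound.
\begin{itemize}
\item Choosing $u$ with $\nu([u]_0)>0$ is legitimate because recurrence and positive recurrence do not depend on the vertex. Do not actually shrink the graph, though. $F$ must count the first-return loops of the full graph; that is what the induced full shift uses, and it is what makes the conclusion apply to $\Sigma$ rather than to $\supp\nu$.
\item Abramov and Kac turn $h_\nu=\log\lambda$ into equality in $h_{\nu_u}\le H_{\nu_u}(\alpha)\le\log F(1/\lambda)+\log\lambda\int\tau\,d\nu_u$, where $\alpha$ is the partition by the first loop.
\item $H_{\nu_u}(\alpha)<\infty$ because $\int\tau\,d\nu_u<\infty$ and $F(1/\lambda)\le1$. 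So equality in the first inequality gives independence, and equality in the Gibbs inequality gives the weights $\lambda^{-|w|}$.
\end{itemize}

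\textbf{Uniqueness.} Add one sentence here. These weights charge every first-return loop, and by irreducibility every vertex lies on one, so every ergodic MME charges every $[v]_0$. Hence two ergodic MMEs can be induced on the same $u$, and Kac plus the tower formula show they coincide.

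What your route buys over the citation is that necessity, uniqueness and (see below) existence all run through one Gibbs inequality on a countable full shift.

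\textbf{The step that fails as written: $h_{\mu_P}=\log\lambda$ in the sufficiency part.} The entropy sum is never infinite here; it is at most $\log\lambda$ by \cref{variational}. The real danger is that your telescoping is $\infty-\infty$, because positive recurrence does not make $\log\ell$ or $\log r$ integrable against $\pi$.

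Take the graph made of $l(n)$ vertex-disjoint simple loops of length $n$ through $u$.
\begin{itemize}
\item On the $j$-th vertex of a loop of length $n$ one gets $\ell=\lambda^{j-n}\ell_u$, $r=\lambda^{-j}r_u$ and $\pi=\lambda^{-n}r_u\ell_u$.
\item So $\sum_v\pi_v|\log\ell_v|$, $\sum_v\pi_v|\log r_v|$ and $H(\pi)$ are all comparable to $\sum_n n^2l(n)\lambda^{-n}$. Positive recurrence only asks for $\sum_n n\,l(n)\lambda^{-n}<\infty$.
\item With $\lambda=2$ and integers $l(n)$ satisfying $l(n)2^{-n}\asymp n^{-5/2}$ and $\sum_n l(n)2^{-n}=1$, you get a positive recurrent shift with Perron value $2$ in which all three quantities diverge.
\end{itemize}
For the same reason, the Shannon--McMillan--Breiman theorem for the one-symbol partition is not available, since it needs $H(\pi)<\infty$.

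\textbf{The repair uses your own machinery.}
\begin{itemize}
\item By the cylinder formula, the cylinder of a concatenation $w_1\cdots w_m$ of first-return loops at $u$ has $\mu_P$-measure $r_u\ell_u\lambda^{-\sum_i|w_i|}$.
\item So the measure induced by $\mu_P$ on $[u]_0$ is the Bernoulli measure with weights $\lambda^{-|w|}$. This is a probability vector because recurrence means $F(1/\lambda)=1$.
\item Its entropy is $\log\lambda\sum_n n\,l_{uu}(n)\lambda^{-n}<\infty$.
\item Since $\mu_P$ is ergodic (stationary measure of an irreducible recurrent chain), Kac and Abramov give $h_{\mu_P}=\log\lambda$.
\end{itemize}
Done this way, you can even dispense with \cref{Perron-Frobenius}: the normalised tower over this Bernoulli loop measure is the MME.
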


We will use the following consequence of \cite[Lemma~6.2]{Buzzi20JMD}, which gives a lower bound for the number of periodic points of an irreducible, positive recurrent Markov shift. The original result counts points of minimal period $n$, but the weaker formulation below suffices for our purposes.

\begin{lemma}\label{lemma:lower bound}
    Let $(\Sigma, \sigma)$ be an irreducible countable Markov shift which is positive recurrent with Gurevich entropy $h$ and period $q$. Then 
    \[
       \liminf_{\substack{n\to\infty,\\  q|n}} e^{-nh}\#\Fix(\sigma^n)\geq q.
    \]
\end{lemma}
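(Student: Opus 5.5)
The plan is to derive the bound from the structure of the unique measure of maximal entropy given by \cref{thm:MME-posi recurr}, together with the renewal theory of positive recurrent chains. We cannot simply cite \cite[Lemma~6.2]{Buzzi20JMD}, so we argue directly.

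\textbf{Step 1: reduce to returns to a single vertex.} Fix a vertex $u\in\cV$ and set $\lambda=e^h$. Every loop of length $n$ based at $u$ determines a distinct point of $\Fix(\sigma^n)$, namely its periodic concatenation. Hence
\[
    \#\Fix(\sigma^n)\ \ge\ \sum_{v\in F} a_{vv}(n)
\]
for any finite set of vertices $F$. In fact, the periodic points of period $n$ passing through $F$ at time $0$ are exactly counted by $\sum_{v\in F}a_{vv}(n)$, since the points are distinct.

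\textbf{Step 2: renewal theorem at a single vertex.} Set $f_n:=l_{uu}(n)\lambda^{-n}$. By recurrence $\sum_n f_n=1$, and by positive recurrence the mean $m:=\sum_n nf_n$ is finite. The renewal sequence $a_{uu}(n)\lambda^{-n}$ satisfies the standard renewal equation with lifetime distribution $(f_n)$, whose support generates $q\Z$. The discrete renewal theorem (Erd\H{o}s--Feller--Pollard, periodic version) then gives
\[
    a_{uu}(n)\lambda^{-n}\ \to\ \frac{q}{m}\qquad\text{as } n\to\infty,\ q\mid n.
\]
Moreover $m$ can be identified with the MME: writing $\mu$ for the Parry-type measure built from \cref{Perron-Frobenius}, one has $\mu([u]_0)=r_u\ell_u/(r\cdot\ell)$, and Kac's lemma gives $m=1/\mu([u]_0)$. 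Thus
\[
    a_{uu}(n)\lambda^{-n}\to q\,\mu([u]_0),\qquad q\mid n.
\]

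\textbf{Step 3: sum over vertices and conclude.} Combining Steps 1 and 2 for a finite set $F$,
\[
    \liminf_{q\mid n} e^{-nh}\#\Fix(\sigma^n)\ \ge\ q\sum_{v\in F}\mu([v]_0).
\]
Since $\sum_{v\in\cV}\mu([v]_0)=1$, letting $F$ increase to $\cV$ gives the bound $q$.

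\textbf{Main obstacle.} The delicate point is Step 2. It requires the periodic form of the renewal theorem, correctly accounting for the gcd $q$ of the return times, and checking that the gcd of the support of $(f_n)$ equals the period $q$ of $\Sigma$. This holds because loops at $u$ are concatenations of first-return loops. One must also identify the limit with $\mu([u]_0)$ via Kac's lemma, or avoid that identification entirely by using $\sum_u 1/m_u=1$. The latter identity itself follows from the stationary measure being the MME normalized by $r\cdot\ell<\infty$.
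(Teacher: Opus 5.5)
Your argument is correct, but it is not what the paper does: the paper gives no proof and simply quotes \cite[Lemma~6.2]{Buzzi20JMD}. That result is stronger, since it counts points of least period $n$, and the paper only remarks that the weaker form suffices.

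You prove the weaker form directly. The steps are:
\begin{enumerate}
  \item Write the renewal equation for $\lambda^{-n}a_{uu}(n)$ with first-return weights $f_n=\lambda^{-n}l_{uu}(n)$. These form a probability distribution by recurrence, have finite mean by positive recurrence, and have support with gcd equal to $q$, because loops at $u$ are concatenations of first-return loops.
  \item Apply the periodic Erd\H{o}s--Feller--Pollard theorem.
  \item Use Kac's lemma to identify $q/m_u$ with $q\,\mu([u]_0)$. This is legitimate because $f_n$ is exactly the first-return law of the chain $K$ of \eqref{eq:transition-stationary} started at $u$.
  \item Truncate to finite vertex sets, and use $\sum_v\mu([v]_0)=1$ together with the fact that all vertices share the same period $q$.
\end{enumerate}
All of these steps check out.

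Your route makes the lemma self-contained, using only the material of the Markov chains subsection. You could shorten Step~2: by \eqref{eq:matrix-identity}, $\lambda^{-n}a_{uu}(n)=(K^n)_{uu}$. The periodic form of the convergence theorem behind \cref{markov-aperiodic}, namely $(K^{qn})_{uu}\to q\rho_u$, comes from applying the aperiodic theorem to $K^q$ on the cyclic class of $u$. It gives the limit $q\rho_u$ directly, with no separate appeal to Kac's lemma. The citation to Buzzi buys only the sharper count of points of least period $n$, which the paper never uses.
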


\subsection{Markov chains}

Let $(\Sigma,\sigma)$ be an irreducible countable Markov shift with $0<h(\Sigma)<\infty$ and a measure of maximal entropy $\mu$. Set $\lambda:=e^{h(\Sigma)}$. By \cref{thm:MME-posi recurr}, $\Sigma$ is positive recurrent. Choose the Perron--Frobenius eigenvectors $r$ and $\ell$ in \cref{Perron-Frobenius}, normalized by $r\cdot\ell=1$, and define
\begin{equation}\label{eq:transition-stationary}
    K_{uv}:=\frac{A_{uv}\ell_v}{\lambda\ell_u},\qquad \rho_u:=r_u\ell_u.
\end{equation}
The eigenvector relations give $\sum_v K_{uv}=1$, $\sum_u\rho_u=1$ and $\rho K=\rho$. Thus $K$ is the transition matrix of an irreducible positive recurrent Markov chain with stationary distribution $\rho$. Since $K_{uv}>0$ if and only if $A_{uv}>0$, this chain has the same period as $\Sigma$. By \eqref{eq:transition-stationary} and induction,
\begin{equation}\label{eq:matrix-identity}
    (K^n)_{uv}=\lambda^{-n}\frac{\ell_v}{\ell_u}(A^n)_{uv},\qquad n\geq0.
\end{equation}

The measure $\mu$ is the stationary Markov measure determined by $K$ and $\rho$; see \cite{Gurevich70}. In particular, for every admissible cylinder $[u_0,\ldots,u_n]_m$, we have
\begin{equation}\label{eq:MME}
    \mu([u_0,\ldots,u_n]_m)=\rho_{u_0}\prod_{j=m}^{n+m-1}K_{u_{j-m}u_{j-m+1}}=\lambda^{-n}r_{u_0}\ell_{u_n},
\end{equation}
where an empty product equals one. In particular, the measure of an admissible cylinder depends only on the length of the path and the starting and ending vertices. 

We will need the following convergence theorem for Markov chains. It states that, starting from any fixed state, the distribution at time $n$ converges to the stationary distribution.

\begin{theorem}[\cite{Durrett99ESP}]\label{markov-aperiodic}
Let $\Sigma$ be a positive recurrent aperiodic irreducible countable Markov shift with finite Perron value. Let $K$ and $\rho$ be its associated stochastic matrix and stationary probability distribution, respectively. Then, for every $u,v\in\mathcal V$,
\[
    \lim_{n\to\infty}(K^n)_{uv}=\rho_v.
\]
\end{theorem}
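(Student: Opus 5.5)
The statement is the classical convergence theorem for a single irreducible, aperiodic, positive recurrent Markov chain on a countable state space. The paper cites it from \cite{Durrett99ESP}, so the plan is to reproduce the standard coupling argument. The inputs are the transition matrix $K$ from \eqref{eq:transition-stationary}, which is stochastic with $\rho K=\rho$, and the stationary vector $\rho$ with $\sum_v\rho_v=1$ and $\rho_v>0$ for all $v$ (since $r,\ell$ are strictly positive by \cref{Perron-Frobenius}). Irreducibility and aperiodicity of the chain are inherited from $\Sigma$, because $K_{uv}>0$ if and only if $A_{uv}>0$.

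The first step is the product chain. I would consider the chain on $\mathcal V\times\mathcal V$ with kernel $\bar K_{(u,u'),(v,v')}=K_{uv}K_{u'v'}$. Aperiodicity and irreducibility give, for each pair $u,v$, an $N$ such that $(K^n)_{uv}>0$ for all $n\geq N$. This is the usual number-theoretic fact: the set of return times to $u$ is closed under addition and has $\gcd$ $1$, so it contains all large integers, and one then concatenates with a fixed path from $u$ to $v$. It follows that $\bar K$ is irreducible. Moreover $\rho\otimes\rho$ is a stationary probability for $\bar K$, and an irreducible chain admitting a stationary probability is recurrent. Hence the product chain started at any $(u,v)$ hits the diagonal state $(w,w)$, for a fixed $w$, almost surely.

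The second step is the coupling. I would run $X_n$ from $u$ and $Y_n$ from the stationary law $\rho$ independently, and let $T$ be the first time they meet at $w$. After time $T$ I let $Y$ follow $X$; by the strong Markov property the modified $Y$ is still a chain with kernel $K$ started from $\rho$, so its law at time $n$ is $\rho$. This gives
\[
    \sum_v\bigl|(K^n)_{uv}-\rho_v\bigr|\le 2\,\mathbb P(T>n).
\]
The right-hand side tends to $0$ because $T<\infty$ almost surely: the product chain started from $\rho$-distributed initial pairs is recurrent from every starting state. This yields the claim, and in fact convergence in total variation.

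The main obstacle is the recurrence of the product chain. Recurrence of a single chain does not transfer to products in general (for example, the simple random walk on $\mathbb Z^3$ is built from recurrent one-dimensional walks but is transient). What rescues the argument is positive recurrence, through the existence of the stationary probability $\rho\otimes\rho$. The point to spell out carefully is the fact that a transient irreducible chain has $\bar K^n_{xy}\to0$ for all states $x,y$. By dominated convergence this would force $(\rho\otimes\rho)(y)=\sum_x(\rho\otimes\rho)(x)\,\bar K^n_{xy}\to0$, which contradicts the positivity of $\rho\otimes\rho$.
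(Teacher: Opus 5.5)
Your proposal is correct and takes essentially the same route as the source: the paper gives no proof of its own and cites \cite{Durrett99ESP}, where the convergence theorem is proved by this coupling argument. That argument uses irreducibility of the product chain via aperiodicity, recurrence of the product chain from the stationary probability $\rho\otimes\rho$, and the bound $\sum_v|(K^n)_{uv}-\rho_v|\le 2\,\mathbb P(T>n)$. One cosmetic point: simple random walk on $\mathbb Z^3$ is not literally a product of independent one-dimensional walks (three independent walks on $\mathbb Z$ run simultaneously give the correct transient example), but this motivational aside plays no role in the proof.
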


\subsection{MME in the symbolic model}

Next, we prove the desired properties of MME for a countable Markov shift with an upper bound on the number of periodic points. This result provides the symbolic model for the proof of our main theorem.

\begin{proposition}\label{prop:symbolic}
    Let $(\Sigma,\sigma)$ be a countable Markov shift and set $\lambda>1$. Assume that $\Fix(\sigma^n)$ is finite for every $n\geq 1$ and
    \begin{equation}\label{eq:symbolic-upper}
        \limsup_{n\to\infty} \lambda^{-n}\#\Fix(\sigma^n)\leq 1.
    \end{equation}
    Then there is at most one $\sigma$-invariant Borel probability measure $\nu$ with $h_\nu(\sigma)=\log\lambda$.

    If such a measure exists, it is ergodic, the unique measure of maximal entropy and is supported on an aperiodic positive recurrent irreducible Markov subshift $\Sigma(\cC)$. Moreover, $(\Sigma,\nu,\sigma)$ is Bernoulli, and the measures
    \[
        \eta_n:=\lambda^{-n}\sum_{x\in\Fix(\sigma^n)}\delta_x
    \]
    converge weakly to $\nu$ as $n\to\infty$. 
\end{proposition}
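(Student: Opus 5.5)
Let $\nu$ be any $\sigma$-invariant probability measure with $h_\nu(\sigma)=\log\lambda$. The plan is to reduce to ergodic measures, show that each one lives on an irreducible positive recurrent component with Perron value exactly $\lambda$, and then use the periodic-point bound \eqref{eq:symbolic-upper} together with \cref{lemma:lower bound} to force both aperiodicity and uniqueness.

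\textbf{Reduction to the ergodic case.} By the ergodic decomposition and affinity of entropy, almost every ergodic component $\nu_\omega$ has $h_{\nu_\omega}(\sigma)\ge\log\lambda$, with positive probability. So it suffices to study an ergodic $\nu$ with $h_\nu(\sigma)\ge\log\lambda$.

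\textbf{The component carrying $\nu$.} By \cref{lem:ergodic-component}, $\nu$ is supported on an irreducible subshift $\Sigma(\cC)$. Let $\lambda_\cC$ be its Perron value and $q$ its period. Since $\Fix(\sigma^n)$ is finite and $\#\Fix(\sigma^n)\le C\lambda^n$, we get $(A^n)_{uu}\le C\lambda^n$, hence $\lambda_\cC\le\lambda$. By \cref{variational}, $\log\lambda\le h_\nu\le\log\lambda_\cC$, so $\lambda_\cC=\lambda$ and $\nu$ is an MME of $\Sigma(\cC)$. By \cref{thm:MME-posi recurr}, $\Sigma(\cC)$ is positive recurrent and $\nu$ is its unique MME, namely the Markov measure of \eqref{eq:MME}.

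\textbf{Aperiodicity and uniqueness.} \cref{lemma:lower bound} gives $\liminf_{q\mid n}\lambda^{-n}\#\Fix(\sigma^n|_{\Sigma(\cC)})\ge q$, while \eqref{eq:symbolic-upper} gives at most $1$, so $q=1$. Now suppose two distinct components $\cC_1\neq\cC_2$ both carry such measures. Their periodic orbits are disjoint, since a periodic point lies in a single strongly connected component. Adding the two lower bounds then gives $\liminf\lambda^{-n}\#\Fix(\sigma^n)\ge 2$, a contradiction. Hence there is a single component, and therefore a single ergodic MME $\nu$. Since every ergodic component of a general invariant $\nu$ with entropy $\log\lambda$ must equal this one, and entropy $>\log\lambda$ is impossible by the same argument, $\nu$ is ergodic and is the unique measure with $h_\nu(\sigma)=\log\lambda$. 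By the same reasoning it is the unique MME of $\Sigma$.

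\textbf{Bernoulli property.} $\nu$ is a stationary Markov measure for an irreducible, aperiodic, positive recurrent chain. Such a countable-state chain is mixing, and by the Friedman--Ornstein theorem (extended to countable state spaces) its two-sided process is Bernoulli.

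\textbf{Equidistribution.} This is the step I expect to be the main obstacle, because the upper bound is only global. Fix a cylinder $Z=[u_0,\dots,u_m]_0$ in $\Sigma(\cC)$. Periodic points of period $n>m$ in $Z$ correspond to loops starting with the path $u_0\to\cdots\to u_m$ and closing from $u_m$ back to $u_0$ in $n-m$ steps, so their number is $(A^{n-m})_{u_mu_0}$. By \eqref{eq:matrix-identity} and \cref{markov-aperiodic},
\[
\lambda^{-n}(A^{n-m})_{u_mu_0}=\lambda^{-m}\frac{\ell_{u_m}}{\ell_{u_0}}(K^{n-m})_{u_mu_0}\to\lambda^{-m}\frac{\ell_{u_m}}{\ell_{u_0}}\rho_{u_0}=\lambda^{-m}r_{u_0}\ell_{u_m},
\]
which equals $\nu(Z)$ by \eqref{eq:MME}. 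Shifting the time index handles general cylinders $[\cdot]_j$.

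Thus $\liminf\eta_n(Z)\ge\nu(Z)$ for all cylinders in $\Sigma(\cC)$, and trivially for cylinders meeting other symbols. Cylinders form a countable basis of open sets closed under finite intersections, and every open set is a countable disjoint union of cylinders. This gives $\liminf\eta_n(U)\ge\nu(U)$ for all open $U$. Combined with $\limsup\eta_n(\Sigma)\le1=\nu(\Sigma)$, the Portmanteau theorem yields $\eta_n\to\nu$ weakly; the upper bound prevents mass from escaping, even though $\Sigma$ is not compact.
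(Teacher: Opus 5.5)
Your proposal is correct and follows essentially the paper's argument. You bound the Perron value of the carrying component by the periodic-point count, use Gurevich's theorem for positive recurrence and uniqueness within a component, and use \cref{lemma:lower bound} to force aperiodicity and exclude a second component. You then get cylinder convergence from \eqref{eq:matrix-identity} and \cref{markov-aperiodic} and conclude by Portmanteau. The differences are minor: you cite Friedman--Ornstein for countable-state mixing Markov chains where the paper applies Sarig's theorem to the zero potential, and you state explicitly the total-mass condition $\eta_n(\Sigma)\to1$ and the treatment of cylinders outside $\cC$, which the paper leaves implicit. Your opening sentence on the ergodic decomposition is garbled, but your later observation that every ergodic measure has entropy at most $\log\lambda$ gives what is needed: almost every ergodic component has entropy exactly $\log\lambda$.
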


We first prove a preliminary version of the uniqueness for ergodic invariant measures of entropy $\log\lambda$.

\begin{lemma}\label{lemma:unique-ergodic}
    Under the assumptions of \cref{prop:symbolic}, there is at most one ergodic invariant probability measure of entropy $\log\lambda$.
\end{lemma}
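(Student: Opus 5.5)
Suppose $\nu_1\neq\nu_2$ are two ergodic $\sigma$-invariant probability measures with $h_{\nu_i}(\sigma)=\log\lambda$. We will derive a contradiction with \eqref{eq:symbolic-upper} by counting periodic points in two disjoint irreducible pieces.

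First, apply \cref{lem:ergodic-component} to each $\nu_i$. This gives strongly connected components $\cC_i$, each containing a loop of positive length, with $\supp(\nu_i)\subset\Sigma(\cC_i)$.

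Next, bound the Gurevich entropy of each piece. By \cref{variational}, $h(\Sigma(\cC_i))\ge\log\lambda$. In fact equality holds. If $h(\Sigma(\cC_i))>\log\lambda$, then some finite subgraph $\cG'\subset\cC_i$ has $h_{top}(\Sigma(\cG'))>\log\lambda$. Passing to an irreducible component of that finite subgraph, the standard count for finite irreducible shifts of finite type gives $\#\Fix(\sigma^n)\ge c\,e^{nh'}$ along multiples of its period, with $h'>\log\lambda$. This contradicts \eqref{eq:symbolic-upper}. Finiteness of $\Fix(\sigma^n)$ also rules out infinite Perron value, since an infinite Perron value would force infinitely many loops of some fixed length. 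So each $\Sigma(\cC_i)$ has finite Perron value exactly $\lambda$ and carries the measure of maximal entropy $\nu_i$. By \cref{thm:MME-posi recurr} it is positive recurrent, and $\nu_i$ is its unique MME. Let $q_i$ be its period.

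Now split into cases.

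\textbf{Case $\cC_1=\cC_2$.} Then $\nu_1$ and $\nu_2$ are both maximal entropy measures of the same irreducible positive recurrent shift. By the uniqueness in \cref{thm:MME-posi recurr}, $\nu_1=\nu_2$, a contradiction.

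\textbf{Case $\cC_1\neq\cC_2$.} Distinct strongly connected components have disjoint vertex sets. So the subshifts $\Sigma(\cC_1)$ and $\Sigma(\cC_2)$ are disjoint, and their periodic points are distinct elements of $\Fix(\sigma^n)$. Take $n$ to be a multiple of $q_1q_2$ and apply \cref{lemma:lower bound} to each piece. This gives
\[
\liminf_{\substack{n\to\infty\\ q_1q_2\mid n}}\lambda^{-n}\#\Fix(\sigma^n)\ \ge\ q_1+q_2\ \ge 2,
\]
which contradicts \eqref{eq:symbolic-upper}.

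The main obstacle is the entropy-equality step, that is, excluding $h(\Sigma(\cC_i))>\log\lambda$ and an infinite Perron value. The route above passes through finite subgraphs, where exponential growth of periodic points at the topological entropy rate is classical (Perron--Frobenius for finite irreducible matrices). Beyond that, the disjointness in the second case only needs the observation that a periodic orbit lies in a single component.
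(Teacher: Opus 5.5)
Your argument is correct and is essentially the paper's: place each measure on an irreducible component via \cref{lem:ergodic-component}, show that component has Perron value exactly $\lambda$ so that \cref{thm:MME-posi recurr} gives positive recurrence and uniqueness, then apply \cref{lemma:lower bound} to two disjoint components to get $\liminf_n\lambda^{-n}\#\Fix(\sigma^n)\geq 2$. The paper gets the entropy upper bound more directly from $(A_{\cC}^n)_{uu}\leq\#\Fix(\sigma^n)$ and \eqref{eq:symbolic-upper}, which also rules out an infinite Perron value; your aside that an infinite Perron value forces infinitely many loops of some fixed length is false in general, since the loop counts can be finite but grow superexponentially, but this is harmless because your finite-subgraph argument already covers $h(\Sigma(\cC_i))=\infty$.
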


\begin{proof}
Suppose $\nu$ is an ergodic invariant probability measure. By \cref{lem:ergodic-component}, $\nu$ is supported on an irreducible subshift $\Sigma(\cC)$. Let $A_\cC$ be its adjacency matrix. For every vertex $u$ of $\cC$,
\[
    (A_\cC^n)_{uu}=\#\{x\in\Fix(\sigma^n|_{\Sigma(\cC)}):x_0=u\}\leq\#\Fix(\sigma^n).
\]
By \eqref{eq:symbolic-upper}, the Perron value of $\Sigma(\cC)$ is at most $\lambda$. The variational principle (\cref{variational}) therefore gives
\begin{equation}\label{eq:up-boun}
    h_\nu(\sigma)\leq h(\Sigma(\cC))\leq\log\lambda.
\end{equation}

Now suppose $h_\nu(\sigma)=\log\lambda$. By \eqref{eq:up-boun} and \cref{thm:MME-posi recurr}, $\Sigma(\cC)$ is positive recurrent with Perron value $\lambda$, and $\nu$ is its unique measure of maximal entropy. Let $q$ be the period of $\Sigma(\cC)$. By \cref{lemma:lower bound},
\[
    q\leq\liminf_{\substack{n\to\infty\\q\mid n}}\lambda^{-n}\#\Fix(\sigma^n|_{\Sigma(\cC)}) \leq\limsup_{n\to\infty} \lambda^{-n}\#\Fix(\sigma^n)\leq1.
\]
Hence $q=1$, and we obtain
\begin{equation}\label{eq:preiodic}
 \lambda^{-n}\#\Fix(\sigma^n|_{\Sigma(\cC)})\longrightarrow 1, \qquad \lambda^{-n}\#\Fix(\sigma^n)\longrightarrow 1.
\end{equation}

Suppose that there are two distinct ergodic invariant probability measures $\nu_1,\nu_2$ of entropy $\log\lambda$. By \cref{lem:ergodic-component} and \cref{thm:MME-posi recurr}, they are carried by distinct irreducible subshifts $\Sigma(\cC_1)$ and $\Sigma(\cC_2)$. Applying \eqref{eq:preiodic} to both gives
\[
    \liminf_{n\to\infty}\lambda^{-n}\#\Fix(\sigma^n) \geq \lim_{n\to\infty}\lambda^{-n} \sum_{i=1}^{2}\#\Fix(\sigma^n|_{\Sigma(\cC_i)})=2,
\]
contradicting \eqref{eq:symbolic-upper}. 
\end{proof}

\begin{proof}[Proof of \cref{prop:symbolic}]

Let $\mu$ be any invariant probability measure and write its ergodic decomposition as $\mu=\int\mu_e\,d\tau(e)$. The entropy formula and \eqref{eq:up-boun} give
\[
    h_\mu(\sigma) =\int h_{\mu_e}(\sigma)\,d\tau(e)\leq\log\lambda.
\]
If $h_\mu(\sigma)=\log\lambda$, then $h_{\mu_e}(\sigma)=\log\lambda$ for $\tau$-almost every $e$. By \cref{lemma:unique-ergodic}, the measures $\mu_e$ coincide for $\tau$-almost every $e$. We write $\mu_e=\nu$ for $\tau$-almost every $e$. Thus $\mu=\nu$ is ergodic and is the unique invariant probability measure of maximal entropy $\log\lambda$. 

The measure $\nu$ is supported on an irreducible subshift $\Sigma(\cC)$ which is aperiodic and therefore topologically mixing, positive recurrent, and satisfies \eqref{eq:preiodic}. The Bernoulli property follows from \cite[Theorem~3.1]{Sarig11JMD}, applied to the zero potential on $\Sigma(\cC)$. Since $\supp(\nu)\subset \Sigma(\cC)$, the system $(\Sigma,\sigma,\nu)$ is also Bernoulli.

It remains to prove equidistribution. By \cref{markov-aperiodic} and \eqref{eq:matrix-identity}, we have
\begin{equation}\label{eq:matrix-limit}
    \lambda^{-n}(A^n)_{uv}\longrightarrow r_{v}\ell_{u}.
\end{equation} Fix an admissible cylinder $C=[u_0,\cdots, u_m]_0$. For $n>m$, completing the path $(u_0,\ldots,u_m)$ to a fixed point of $\sigma^n$ amounts to choosing a path of length $n-m$ from $u_m$ to $u_0$. Consequently, we have
\[
    \#\bigl(\Fix(\sigma^n)\cap C\bigr)=(A^{n-m})_{u_m,u_0}.
\]
Using \eqref{eq:matrix-limit} and \eqref{eq:MME}, we obtain
\begin{align*}
 \lim_{n\to\infty}\eta_n(C)
    &=\lim_{n\to\infty}\lambda^{-m}\lambda^{-(n-m)}(A^{n-m})_{u_m,u_0}\\
    &=\lambda^{-m}r_{u_0}\ell_{u_m}
    =\nu(C).
\end{align*}
Since both $\eta_n$ and $\nu$ are shift-invariant, we can obtain the convergence for cylinders on any finite interval. Note that the intersection of finite cylinders is empty or a disjoint union of several cylinders, the above convergence result also holds for finite union of cylinders because of inclusion-exclusion principle. Because the cylinders on finite intervals form a countable topology basis for $\Sigma$, for any open set $G\subset\Sigma$, we can write $G=\bigcup_{j\geq1}C_j$, where each $C_j$ is a cylinder contained in $G$. For every $N\geq1$,
\[
    \liminf_{n\to\infty}\eta_n(G)\geq\lim_{n\to\infty}\eta_n\Bigl(\bigcup_{j=1}^{N}C_j\Bigr)=\nu\Bigl(\bigcup_{j=1}^{N}C_j\Bigr).
\]
Letting $N\to\infty$ yields $\liminf_{n\to\infty}\eta_n(G)\geq\nu(G)$ for any open subset $G$. This implies $\eta_n$ converges to $\nu$. Therefore, we finish the proof.
\end{proof}

\begin{remark}\label{rmk:xi}
    It follows from \eqref{eq:preiodic} that $\#\Fix(\sigma^n)\setminus\Fix(\sigma^n|_{\Sigma(\cC)})=o(\lambda^n)$. Thus if we define $\xi_n:=\lambda^{-n}\sum_{x\in\Fix(\sigma^n|_{\Sigma(\cC)})}\delta_x$, then they also converge to $\nu$ as $n\to\infty$.
\end{remark}

\section{Natural extensions and coding}\label{sec:symbolic}

\subsection{Natural extensions and Lyapunov exponents}\label{sec:natural-extension}

Let $f$ be a dominant meromorphic map on $X$ and let $\cS=I(f) \cup C(f)$ be as in \cref{sec:intro}. Put $U:=X\setminus\cS$. We give a concise introduction of the natural extension of $f$, following \cite[\S2.1]{DDG3ENS} and \cite[\S2.2]{LOP26JEMS}. Let $d$ be the distance induced by a fixed K\"ahler metric on $X$, and define the \emph{regular history space}
\[
    \widehat X_{\mathrm{reg}} :=\left\{(x_j)_{j\in\mathbb Z}\in U^{\mathbb Z}: f(x_j)=x_{j+1}\text{ for every }j\in\mathbb Z\right\}.
\]
This is a closed subset of the Polish space $U^{\mathbb Z}$ and hence a standard Borel space. The left shift and the zeroth-coordinate projection are
\[
    \widehat f((x_j)_j):=(x_{j+1})_j, \qquad \vartheta_0((x_j)_j):=x_0.
\]
They satisfy $\vartheta_0\circ\widehat f=f\circ\vartheta_0$. The metric $\widehat d(\widehat x,\widehat y):=\sup_{j\leq0}\{2^j d(x_j,y_j)\}$ induces the product topology on $\widehat X_{\mathrm{reg}}$ for which $\widehat f$ is a homeomorphism and $\vartheta_0$ is $1$-Lipschitz.

For every invariant probability measure $\nu$ with $\nu(\cS)=0$, there is a unique $\widehat f$-invariant probability measure $\widehat\nu$ on $\widehat X_{\mathrm{reg}}$ such that $(\vartheta_0)_*\widehat\nu=\nu$. We call $\widehat\nu$ the \emph{natural-extension lift} of $\nu$. The measure $\widehat\nu$ is ergodic if and only if $\nu$ is ergodic, and their Kolmogorov--Sinai entropies agree:
\[
    h_{\widehat\nu}(\widehat f)=h_\nu(f).
\]
These are standard properties of the natural extension; see \cite{Rohl}.

Let $N:=\vartheta_0^*TX$ be the pulling back of the tangent vector bundle $TX$ to $ \widehat{X}_{\rm reg}$, with fibre $N_{\hat x}=T_{x_0}X$ at $\hat x=(x_j)_{j\in\Z}$. Since every coordinate of $\hat x$ lies outside $\cS$, the derivative defines an invertible complex-linear cocycle
\[
    \widehat{Df}_{\hat x}:=Df_{x_0}\colon N_{\hat x}\longrightarrow N_{\widehat f(\hat x)}.
\]
Set $\widehat{Df}^{(0)}_{\hat x}:=\mathrm{id}_{N_{\hat x}}$. For $n\geq1$, its forward and backward iterates are
\[
    \widehat{Df}^{(n)}_{\hat x}=D(f^n)_{x_0},\qquad\widehat{Df}^{(-n)}_{\hat x}=\bigl(D(f^n)_{x_{-n}}\bigr)^{-1}.
\]

The derivative estimate \eqref{eq:derivative-growth} in \hyperref[app:estimates]{Appendix} ensures that the cocycle $\widehat{Df}$ satisfies the integrability assumptions of Oseledets' theorem for every adapted invariant probability measure $\nu$. Indeed, write $\log^+t:=\max\{0,\log t\}$. Since the $x_0$- and $x_{-1}$-marginals of $\widehat\nu$ are both $\nu$, we obtain
\[
\begin{aligned}
    &\int_{\widehat X_{\mathrm{reg}}}\left(\log^+\|\widehat{Df}^{(1)}_{\hat x}\|+\log^+\|\widehat{Df}^{(-1)}_{\hat x}\|\right)\,\mathrm d\widehat\nu(\hat x)\\
    &\qquad=\int_U\left(\log^+\|Df_x\|+\log^+\|(Df_x)^{-1}\|\right)\,\mathrm d\nu(x)\\
    &\qquad\leq 2\log C_0+2b\int_X-\log\rho(x)\,\mathrm d\nu(x)<\infty.
\end{aligned}
\]

Suppose now that $\nu$ is ergodic. By the non-invertible version of Oseledets' theorem, there are finite real numbers $\gamma_1>\cdots>\gamma_r$, and on an $\widehat f$-invariant set of full $\widehat\nu$-measure, a measurable splitting
\[
    N_{\hat x}=F_1(\hat x)\oplus\cdots\oplus F_r(\hat x)
\]
such that
\[
    \widehat{Df}_{\hat x}F_i(\hat x)=F_i(\widehat f(\hat x)),\qquad \lim_{n\to\pm\infty}\frac1n\log\|\widehat{Df}^{(n)}_{\hat x}v\|=\gamma_i
\]
for every $0\ne v\in F_i(\hat x)$. Ergodicity implies that the dimensions of these subspaces are constant $\widehat\nu$-almost everywhere.

Each $F_i(\hat x)$ is a complex subspace. Repeating each $\gamma_i$ according to $\dim_{\mathbb C}F_i$, we obtain the \textit{Lyapunov exponents} $\chi_1(\nu)\geq\cdots\geq\chi_k(\nu)$, counted with complex multiplicity. These coincide with the Lyapunov exponents of $Df$ with respect to $\nu$.

For $\chi>0$, we call $\nu$ \emph{$\chi$-hyperbolic} if $\min_i|\chi_i(\nu)|>\chi$. In this case there are no zero exponents, and the Oseledets stable and unstable subspaces
\begin{equation}\label{eq:oseledet-un/stable}
    E_{\mathrm{Os}}^s(\hat x):=\bigoplus_{\gamma_i<0}F_i(\hat x),\qquad E_{\mathrm{Os}}^u(\hat x):=\bigoplus_{\gamma_i>0}F_i(\hat x)
\end{equation}
give a measurable invariant splitting $N_{\hat x}=E_{\mathrm{Os}}^s(\hat x)\oplus E_{\mathrm{Os}}^u(\hat x)$. Vectors in $E_{\mathrm{Os}}^s(\hat x)$ contract exponentially under forward iteration, while vectors in $E_{\mathrm{Os}}^u(\hat x)$ contract exponentially under backward iteration.

\subsection{Coding of the natural extension}

Symbolic coding for nonuniformly hyperbolic systems has developed substantially in recent years. Relevant advances include Ben Ovadia's construction for diffeomorphisms in arbitrary dimension \cite{Ovadia18JMD}, the coding for surface diffeomorphisms by Buzzi--Crovisier--Sarig \cite{BCS22Annals}, and the extension to noninvertible maps with singularities in arbitrary dimension by Ara\'ujo--Lima--Poletti \cite{ALP24MAMS}. We use the following consequence of \cite[Theorem~3.1]{LOP26JEMS}. As in \cite{ALP24MAMS}, its application requires geometric and regularity assumptions \textup{(A1)--(A7)}, stated in \cite[\S3.1]{LOP26JEMS}. These are verified in our setting in \hyperref[app:estimates]{Appendix}.

\begin{theorem}[\cite{LOP26JEMS}]\label{thm:mme-coding}
    Let $(X,f)$ be as above. For every $\chi>0$, there are a locally compact countable Markov shift $(\Sigma_1,\sigma_1)$ and a H\"older continuous map $\pi: \Sigma_1^\#\longrightarrow\widehat X_{\mathrm{reg}}$ such that $\pi\circ\sigma_1=\widehat f\circ\pi$ with the following properties.
\begin{enumerate}[label=\textup{(\arabic*)}]
    \item The map $\pi$ is finite-to-one.

    \item For every adapted invariant $\chi$-hyperbolic probability measure $\nu$ on $X$, let $\widehat{\nu}$ be its natural-extension lift on $\widehat{X}_{\rm reg}$. Then we have $\widehat\nu\bigl(\pi(\Sigma_1^\#)\bigr)=1$. 

    \item Bowen Property: there is a reflexive symmetric relation $\sim$ on the alphabet $V_1$ such that $\{w\in V_1:v\sim w\}$ is finite for every $v\in V_1$, and, for all $z,w\in\Sigma_1^\#$,
    \[
        \pi(z)=\pi(w) \quad\Longleftrightarrow\quad z_j\sim w_j \text{ for every }j\in\mathbb Z.
    \]

    \item Over $\pi(\Sigma_1^\#)$ there is a
    $\widehat{Df}$-invariant splitting into complex subspaces
    \[
        N_{\hat x}=T_{\vartheta_0(\hat x)}X=E^s(\hat x)\oplus E^u(\hat x),
    \]
    such that
    \begin{equation}\label{eq:coding-local-contraction}
            \limsup_{j\to\infty}\frac1j \log\bigl\|\widehat{Df}^{(j)}_{\hat x}|_{E^s(\hat x)}\bigr\|\leq-\frac{\chi}{2},\quad \limsup_{j\to\infty}\frac1j\log\bigl\|\widehat{Df}^{(-j)}_{\hat x}|_{E^u(\hat x)}\bigr\|\leq-\frac{\chi}{2}.
    \end{equation}
    The maps $z\mapsto E^{s/u}(\pi(z))$ are H\"older continuous.
\end{enumerate}
\end{theorem}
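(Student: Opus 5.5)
The statement is a direct consequence of \cite[Theorem~3.1]{LOP26JEMS}. The plan is therefore to check that our $(X,f)$ fits its framework and then read off the four conclusions.

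\textbf{Step 1: place $(X,f)$ in the framework of \cite{LOP26JEMS}.} We take $M:=X$ with the K\"ahler metric and singular set $\cS=I(f)\cup C(f)$. On $U=X\setminus\cS$ the map $f$ is a local biholomorphism. We use the regular history space $\widehat X_{\mathrm{reg}}$ as the natural extension.

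\textbf{Step 2: verify the hypotheses (A1)--(A7).} This is the main work, and it is deferred to the Appendix. The key estimates are of polynomial type in $\rho$:
\[
    \|Df_x\|,\ \|(Df_x)^{-1}\|\le C_0\rho(x)^{-b}.
\]
We also need a corresponding bound for the second derivative, or a Lipschitz bound for $Df$ on balls of radius $\asymp\rho(x)^{a}$, on which $f$ is injective with controlled inverse branches. To obtain these, we work locally in charts. Near $I(f)$ we use a resolution of the graph of $f$ together with {\L}ojasiewicz inequalities. Near $C(f)$ we bound $(Df)^{-1}$ by the adjugate formula, dividing by the Jacobian determinant, which is bounded below by a power of $\dist(x,C(f))$ by {\L}ojasiewicz. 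The volume-type assumption, namely that balls of radius $\rho^{a}$ have measure comparable to a power, is automatic on a compact manifold. We expect this step to be the main obstacle: one needs uniform exponents near the indeterminacy locus, where $f$ is not even continuous.

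\textbf{Step 3: read off the conclusions of \cite[Theorem~3.1]{LOP26JEMS}.}
\begin{enumerate}[label=\textup{(\arabic*)}]
    \item That theorem, applied for the given $\chi$, yields a locally compact countable Markov shift $\Sigma_1$ and a H\"older map $\pi$ on $\Sigma_1^\#$ commuting with the shifts.
    \item It gives full-measure coding for every adapted $\chi$-hyperbolic measure. Here we use that the adapted condition makes the lift $\widehat\nu$ satisfy their integrability requirement, by the computation following Oseledets' theorem above.
    \item It gives the Bowen relation $\sim$, with its local finiteness. Finite-to-one then follows: the fibre of $\pi$ over a point is contained in sequences with $w_j\sim z_j$. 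On $\Sigma_1^\#$, a symbol that recurs infinitely often in both directions bounds the number of preimages, by the standard Sarig/Ben Ovadia argument.
    \item It provides the H\"older invariant splitting $E^s\oplus E^u$ with contraction rate $\chi/2$. This comes from the Pesin charts: the graph transforms contract at rate $e^{-\chi/2}$ along each coded orbit.
\end{enumerate}

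\textbf{Step 4: check complex-linearity.} We confirm that $E^{s/u}$ are complex subspaces. They arise as limits of $\widehat{Df}$-images of complex Oseledets spaces (equivalently, as tangent spaces of stable and unstable manifolds, which are complex submanifolds for holomorphic $f$). Since $\widehat{Df}$ is $\C$-linear, they are complex.
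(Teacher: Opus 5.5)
Your overall route is the paper's: the statement is imported from \cite[Theorem~3.1]{LOP26JEMS}, and the only real work is verifying (A1)--(A7), which the Appendix does. There is one invalid inference, which is easy to repair.

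\textbf{The gap.} In Step~3(3) you derive property (1) from the Bowen property. A locally finite Bowen relation does not force finite fibres. Take the full shift over $\{a,b\}$, let $\sim$ relate all pairs of symbols, and let $\pi$ be constant. Then $\pi(z)=\pi(w)\iff z_j\sim w_j$ for all $j$ holds trivially, yet every fibre is uncountable, and here $\Sigma^\#=\Sigma$. Your pigeonhole step only produces two preimages that agree at times $-m_k$ and $n_k$. The relation $\sim$ then lets you splice them (one inside $[-m_k,n_k]$, the other outside) into further preimages; this gives no contradiction. The Sarig/Ben Ovadia bound $N(v)N(w)$ relies on the Markov partition behind the coding: the symbols are rectangles of a locally finite partition with the Markov property. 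That structure is not retained in the abstract statement~(3). The fix is to read (1) off directly, since finiteness of fibres on $\Sigma_1^\#$ is itself part of the conclusion of \cite[Theorem~3.1]{LOP26JEMS}, just as (3) is.

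\textbf{Comparison on Step~2.} The paper does not redo the resolution/{\L}ojasiewicz analysis. Instead it:
\begin{enumerate}[label=\textup{(\roman*)}]
    \item imports $\|Df_x\|+\|\nabla Df_x\|+\|(Df_x)^{-1}\|\le C_0\rho(x)^{-b}$ from the proof of \cite[Lemma~7]{deThelin08Invent}, which is essentially your argument;
    \item obtains inverse branches on balls of radius $c\rho(x)^N$ from a quantitative inverse function theorem, as in \cite{Briend-Duval99};
    \item takes $r(x)=2\min\{\rho(x),\rho(f(x))\}^a$ with $a>2N$.
\end{enumerate}
The last choice absorbs the Lipschitz constant $C\rho(x)^{-N}$ of $\widetilde{Df}$ into a uniform $\tfrac12$-H\"older bound, which gives (A7). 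Note also that (A1)--(A4) are not volume conditions: they concern the injectivity radius and the exponential map. They do hold automatically by the bounded geometry of the compact manifold $X$, as you say. Finally, your Step~4 supplies a justification for the complexness of $E^{s/u}$, which the paper simply asserts.
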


\section{Proof of the main results}\label{sec:proof-main}

We start the proof of \cref{thm:main}. Denote by $\lambda:=d_p(f)>1$. Log-concavity implies that the dynamical degrees strictly increase up to $p$ and strictly decrease afterwards. By \cite[Corollary~3]{deThelin08Invent}, every adapted ergodic invariant measure $\nu$ with $h_\nu(f)=\log\lambda$ satisfies
\begin{equation}\label{eq:dethelin-lyapunov}
    \chi_p(\nu)\geq\frac12\log\frac{\lambda}{d_{p-1}(f)}>0,\qquad \chi_{p+1}(\nu)\leq-\frac12\log\frac{\lambda}{d_{p+1}(f)}<0.
\end{equation}
Thus every such measure has exactly $p$ positive and $k-p$ negative complex Lyapunov exponents. Fix
\[
    0<\chi_0<\frac12\min\left\{\log\frac{\lambda}{d_{p-1}(f)},\log\frac{\lambda}{d_{p+1}(f)}\right\}.
\]
All these measures are $\chi_0$-hyperbolic.

Apply \cref{thm:mme-coding} with $\chi=\chi_0$. Buzzi's Main Theorem \cite{Buzzi20JMD}, applied as in \cite[Theorem~9.1]{ALP24MAMS}, gives a countable Markov shift $(\Sigma,\sigma)$ and a $1$-Lipschitz map $\phi\colon\Sigma\to\Sigma_1^\#$ such that $\widetilde\Pi:=\pi\circ\phi$ is an injective semiconjugacy whose image carries the natural-extension lift of every adapted ergodic invariant measure of entropy $\log\lambda$. Set
\[
    \Pi:=\vartheta_0\circ\widetilde\Pi.
\]
Then $\Pi$ is H\"older continuous, $\Pi\circ\sigma=f\circ\Pi$, and $\widetilde\Pi(z)=\bigl(\Pi(\sigma^jz)\bigr)_{j\in\Z}$. We have the following commutative diagram:
\[
    \begin{tikzcd}[row sep=3em, column sep=4em]
    \Sigma \arrow[d, "\phi"'] \arrow[dr, "\widetilde\Pi"'] \arrow[drr, bend left=15, "\Pi"] & & \\
    \Sigma_1^\# \arrow[r, "\pi"'] & \widehat X_{\mathrm{reg}} \arrow[r, "\vartheta_0"'] & X
    \end{tikzcd}
\]

\begin{lemma}\label{lemma-1}
    Every adapted ergodic invariant probability measure $\nu$ with $h_\nu(f)=\log\lambda$ has a unique $\sigma$-invariant lift $m$ satisfying $\Pi_*m=\nu$. The map $\widetilde\Pi$ identifies $(\Sigma,m,\sigma)$ with $(\widehat{X}_{\rm reg}, \widehat{\nu}, \widehat{f})$ modulo null sets. In particular, $m$ is ergodic and $h_m(\sigma)=h_\nu(f)$.
\end{lemma}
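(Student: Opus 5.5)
The plan is to construct $m$ by pulling back $\widehat\nu$ through the injective map $\widetilde\Pi$, and to get uniqueness from the injectivity of $\widetilde\Pi$ together with the uniqueness of natural-extension lifts.

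\textbf{Existence.} By construction (Buzzi's recoding applied to the coding of \cref{thm:mme-coding}), $\widetilde\Pi\colon\Sigma\to\widehat X_{\mathrm{reg}}$ is an injective semiconjugacy, and its image $Y:=\widetilde\Pi(\Sigma)$ has full $\widehat\nu$-measure. The first step is to check that $\widetilde\Pi$ is a Borel isomorphism onto $Y$. It is continuous and injective, and $\Sigma$ is Polish. By the Lusin--Souslin theorem, $Y$ is Borel and $\widetilde\Pi^{-1}\colon Y\to\Sigma$ is Borel. I then define $m:=(\widetilde\Pi^{-1})_*(\widehat\nu|_Y)$. Since $\widehat f(Y)=Y$ and $\widetilde\Pi\circ\sigma=\widehat f\circ\widetilde\Pi$, the measure $m$ is $\sigma$-invariant. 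Moreover $\Pi_*m=(\vartheta_0)_*\widehat\nu=\nu$. So $\widetilde\Pi$ is a measurable isomorphism between $(\Sigma,m,\sigma)$ and $(\widehat X_{\mathrm{reg}},\widehat\nu,\widehat f)$ modulo null sets. Ergodicity of $m$ and the equality $h_m(\sigma)=h_{\widehat\nu}(\widehat f)=h_\nu(f)$ then follow from the corresponding facts for the natural extension recalled in \cref{sec:natural-extension}.

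\textbf{Uniqueness.} Let $m'$ be any $\sigma$-invariant probability measure with $\Pi_*m'=\nu$. Then $\widetilde\Pi_*m'$ is an $\widehat f$-invariant probability measure on $\widehat X_{\mathrm{reg}}$ whose $\vartheta_0$-marginal is $\nu$. This uses $\widetilde\Pi(z)=(\Pi(\sigma^jz))_j$ and the fact that $\widetilde\Pi$ lands in $\widehat X_{\mathrm{reg}}$ because $\phi$ maps into $\Sigma_1^\#$. By uniqueness of the natural-extension lift, $\widetilde\Pi_*m'=\widehat\nu$. Since $\widetilde\Pi$ is injective and Borel-bimeasurable onto $Y$, $m'=(\widetilde\Pi^{-1})_*\widehat\nu=m$.

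\textbf{Main obstacle.} The delicate points are twofold. First, the image of $\widetilde\Pi$ must carry $\widehat\nu$, which relies on $\nu$ being $\chi_0$-hyperbolic via \eqref{eq:dethelin-lyapunov} and on \cref{thm:mme-coding}(2). Second, Buzzi's recoding must preserve full measure for the lifts of all measures of entropy $\log\lambda$. Here I would invoke \cite[Theorem~9.1]{ALP24MAMS} as stated: the recoding's image contains the lift of every ergodic measure of the original coding with sufficiently large entropy. Lifting $\widehat\nu$ to $\Sigma_1^\#$ is possible since $\pi$ is finite-to-one; one averages over fibres, or uses the standard lifting for finite-to-one codings. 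The lifted measure has entropy $\log\lambda$ and is therefore carried by $\phi(\Sigma)$. Measurability is handled entirely by Lusin--Souslin, which is why the Polish structure of $\Sigma$ and $\widehat X_{\mathrm{reg}}$ matters.
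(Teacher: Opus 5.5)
Your proposal is correct and follows essentially the same route as the paper: you pull back $\widehat\nu$ through the injective Borel conjugacy $\widetilde\Pi$ (the paper likewise uses that an injective Borel map between standard Borel spaces is a Borel isomorphism onto its image), read off ergodicity and entropy from the natural extension, and get uniqueness from uniqueness of the natural-extension lift together with injectivity of $\widetilde\Pi$. The paper takes $\widehat\nu(\widetilde\Pi(\Sigma))=1$ directly from the construction of $\widetilde\Pi$, so your discussion of that point under ``Main obstacle'' is supplementary and not needed for this lemma.
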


\begin{proof}
Let $\widehat\nu$ be the natural-extension lift of $\nu$. Since $\widetilde\Pi$ is an injective Borel map between standard Borel spaces, it is a Borel conjugacy onto its image and $\widehat{\nu}(\widetilde\Pi(\Sigma))=1$. Hence the pullback $m:=\widetilde\Pi^*\widehat\nu$ is a $\sigma$-invariant probability measure satisfying $\Pi_*m=\nu$, and $\widetilde\Pi$ gives the asserted isomorphism with the natural extension of $(X,\nu,f)$. Since natural extension preserves entropy and ergodicity, $m$ is ergodic and
\[
    h_m(\sigma)=h_{\widehat\nu}(\widehat f)=h_\nu(f).
\]
If $m'$ is another $\sigma$-invariant lift of $\nu$, then $\widetilde\Pi_*m'$ is $\widehat f$-invariant and $(\vartheta_0)_*(\widetilde\Pi_*m')=\nu$. Uniqueness of the natural-extension lift gives $\widetilde\Pi_*m'=\widehat\nu$, and injectivity of $\widetilde\Pi$ therefore yields $m'=m$.
\end{proof}

To establish the periodic-point upper bound required by \cref{prop:symbolic}, we first examine the images of periodic points of $\sigma$. In \cref{thm:mme-coding}-(4), for $z\in \Sigma$, set $E_z^{s/u}:=E^{s/u}(\widetilde\Pi(z))$. Then we have
\[
    T_{\Pi(z)}X=E_z^s\oplus E_z^u.
\]
These subspaces depend continuously on $z$, since the maps $\Sigma_1^{\#}\ni z_1\mapsto E^{s/u}(\pi(z_1))$ are H\"older continuous and $\phi$ is Lipschitz. They inherit the invariance and contraction estimates \eqref{eq:coding-local-contraction}, with $\chi=\chi_0$.

\begin{lemma}\label{lem:periodic-projection}
    For every $n\geq1$, the map $\Pi$ is injective on $\Fix(\sigma^n)$, and each $z\in\Fix(\sigma^n)$ projects to a saddle $n$-periodic point $x:=\Pi(z)$ of $f$, and $x$ is isolated and of multiplicity one. Its stable and unstable spaces are $E_z^s$ and $E_z^u$, respectively. The eigenvalues of $(Df^n)_x|_{E_z^s}$ have modulus at most $e^{-\chi_0n/2}$, while those of $(Df^n)_x|_{E_z^u}$ have modulus at least $e^{\chi_0n/2}$.
\end{lemma}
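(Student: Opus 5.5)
Fix $n\ge1$ and $z\in\Fix(\sigma^n)$, and set $x:=\Pi(z)$. We first check that $x$ is $n$-periodic with orbit in $U=X\setminus\cS$. Since $\widetilde\Pi$ is a semiconjugacy, $\widetilde\Pi(z)=\widetilde\Pi(\sigma^n z)=\widehat f^n(\widetilde\Pi(z))$. Hence the history $\hat x:=\widetilde\Pi(z)=(x_j)_{j\in\Z}$ is $n$-periodic in $\widehat X_{\mathrm{reg}}$, so $x_0=x$ satisfies $f^n(x)=x$. Because $\hat x\in\widehat X_{\mathrm{reg}}$, all $x_j$ lie in $U$; in particular the orbit avoids $I(f)$ and $C(f)$. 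Injectivity of $\Pi$ on $\Fix(\sigma^n)$ comes from injectivity of $\widetilde\Pi$. Indeed, if $z,z'\in\Fix(\sigma^n)$ and $\Pi(z)=\Pi(z')=x$, then $\widetilde\Pi(z)=(f^j(x))_{j\in\Z}$, extended periodically to negative $j$. The same holds for $z'$, and since the forward orbit determines the whole periodic history, $\widetilde\Pi(z)=\widetilde\Pi(z')$, hence $z=z'$.

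Next comes the linear algebra at $x$. Let $L:=(Df^n)_x=\widehat{Df}^{(n)}_{\hat x}$, which is invertible since the orbit avoids $C(f)$. Because $\widehat f^n\hat x=\hat x$, the invariance of the splitting in \cref{thm:mme-coding}-(4) gives $L E^{s}_z=E^s_z$ and $L E^u_z=E^u_z$. By periodicity, $\widehat{Df}^{(jn)}_{\hat x}=L^j$. The estimate \eqref{eq:coding-local-contraction} applied along the subsequence $jn$ then yields $\limsup_j \frac1{jn}\log\|L^j|_{E^s_z}\|\le-\chi_0/2$. By Gelfand's formula, the spectral radius of $L|_{E^s_z}$ is at most $e^{-n\chi_0/2}$. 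Similarly $\widehat{Df}^{(-jn)}_{\hat x}=L^{-j}$ on $E^u_z$, so the spectral radius of $(L|_{E^u_z})^{-1}$ is at most $e^{-n\chi_0/2}$. This means every eigenvalue of $L|_{E^u_z}$ has modulus at least $e^{n\chi_0/2}$. We would want to confirm that the negative iterate in the coding is exactly the inverse along the history; this holds because the history is the periodic one. Since $\chi_0>0$, $L$ has no eigenvalue of modulus one, so $x$ is a saddle, and its stable and unstable spaces are precisely $E^s_z$ and $E^u_z$, as these are complementary invariant subspaces separated by the spectral gap.

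Finally, isolation and multiplicity one follow from $1$ not being an eigenvalue of $L$. Near $x$, the map $f^n$ is holomorphic, because the orbit avoids $I(f)$. Hence $g:=f^n-\mathrm{id}$ in local coordinates is a holomorphic map $(\C^k,x)\to(\C^k,0)$ with invertible differential $L-\mathrm{id}$. By the inverse function theorem, $x$ is an isolated zero of $g$, and its local intersection multiplicity, i.e.\ the multiplicity of the fixed point, equals $1$ because the Jacobian determinant $\det(L-\mathrm{id})\ne0$. A remaining check is that $\dim E^u_z=p$. This is not needed for the lemma, but it would follow from continuity of $z\mapsto E^u_z$ on the irreducible components that carry the relevant measures. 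The main point requiring care, and which we regard as the main obstacle, is the first step: the periodic history is in $\widehat X_{\mathrm{reg}}$, so the whole orbit lies in $U$ and $D f^n$ is genuinely invertible and holomorphic near $x$. This is where we use that $\widetilde\Pi$ lands in $\widehat X_{\mathrm{reg}}$ (through $\pi$ on $\Sigma_1^\#$, noting $\phi(\Sigma)\subset\Sigma_1^\#$).
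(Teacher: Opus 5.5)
Your proposal is correct and follows essentially the same route as the paper. Injectivity comes from the fact that an $n$-periodic history is determined by its zeroth coordinate, combined with injectivity of $\widetilde\Pi$. The eigenvalue bounds come from the $L$-invariance of $E^{s/u}_z$ together with the contraction estimates and Gelfand's formula; you additionally spell out the isolation and multiplicity-one claim via $\det(L-\mathrm{id})\neq 0$, which the paper leaves implicit.
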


\begin{proof}
For every $n\geq 1$, $\vartheta_0$ is injective on $\Fix(\widehat{f}^n)$. Since $\widetilde{\Pi}$ is injective, it follows that $\Pi$ is injective on $\Fix(\sigma^n)$. 

Fix $n\geq 1$ and $z\in\Fix(\sigma^n)$. Write $\hat x:=\widetilde\Pi(z)$. Then $f^n(x)=x$ and its orbit avoids $\cS$. Thus $f^n$ is holomorphic near $x$, and $L:=(Df^n)_x$ is invertible and preserves $E_z^s$ and $E_z^u$. Denote by $\mathrm{spr}(\cdot)$ the spectral radius. Since $(Df^n)_x=(\widehat{Df}^{(n)})_{\hat x}$, by \eqref{eq:coding-local-contraction} and Gelfand's formula, we have
\[
    \mathrm{spr}(L|_{E_z^s})\leq e^{-\chi_0n/2},\qquad \mathrm{spr}\bigl((L|_{E_z^u})^{-1}\bigr)\leq e^{-\chi_0n/2},
\]
whenever the corresponding subspace is nonzero. Hence $L$ is hyperbolic, its stable and unstable subspaces are $E_z^s$ and $E_z^u$, and its eigenvalues satisfy the asserted bounds. 
\end{proof}

By \cref{lem:periodic-projection} and \eqref{eq:criterion-upper}, we have
\[
    \#\Fix(\sigma^n)\leq\#P_n(f)\leq\lambda^n+o(\lambda^n).
\]
We can therefore apply \cref{prop:symbolic}.

We first prove assertion \textup{(1)}. By \cref{lemma-1}, every adapted ergodic invariant measure $\nu$ of entropy $\log\lambda$ has a $\sigma$-invariant lift $m$ satisfying
\[
    \Pi_*m=\nu,\qquad h_m(\sigma)=\log\lambda.
\]
By \cref{prop:symbolic}, there is at most one such $m$, and hence at most one such ergodic $\nu$. If $\mu$ is an adapted invariant probability measure with $h_\mu(f)=\log\lambda$. Consider its ergodic decomposition $\mu=\int\nu_e\d\tau(e)$. It is easy to see that for $\tau$-almost every $e$, $\nu_e$ is adapted with $h_{\nu_e}(f)=\log\lambda$. Consequently, $\nu_e=\nu$ for $\tau$-almost every $e$. Therefore $\mu=\nu$, and it is also ergodic.

Suppose now that this measure $\mu$ exists. By \eqref{eq:dethelin-lyapunov}, $\mu$ is hyperbolic with exactly $p$ positive and $k-p$ negative Lyapunov exponents, counted with complex multiplicity. Denote its lift as in \cref{lemma-1} by $m$. By \cref{prop:symbolic}, $m$ is the measure of maximal entropy of an aperiodic positive recurrent component $\Sigma(\cC)$, $\supp(m)=\Sigma(\cC)$, and $(\Sigma(\cC),m,\sigma)$ is Bernoulli. By \cref{lemma-1}, the natural extension of $(X,\mu,f)$ is therefore Bernoulli.

If $f$ is bimeromorphic, its restriction to $U$ is injective. Indeed, two distinct regular preimages of the same point would yield two disjoint local inverse branches over a common open set, contradicting generic injectivity. Hence $\vartheta_0$ is injective on $\widehat X_{\mathrm{reg}}$ and identifies $(X,\mu,f)$ with its natural extension modulo null sets. Thus $(X,\mu,f)$ is also Bernoulli, proving assertion \textup{(2)}.

To prove assertion \textup{(3)}, we first determine the dimensions of the invariant subspaces throughout $\Sigma(\cC)$.

\begin{lemma}\label{lem:stable-index}
    For every $z\in\Sigma(\cC)$, the subspaces $E_z^s$ and $E_z^u$ are of complex dimensions $k-p$ and $p$, respectively.
\end{lemma}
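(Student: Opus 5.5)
Three steps: show the dimensions are constant on $\Sigma(\cC)$, then compute them on a set of full $m$-measure using the Oseledets splitting of $\mu$, and finally propagate to all of $\Sigma(\cC)$.

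\emph{Step 1: constancy.} The maps $z\mapsto E_z^s$ and $z\mapsto E_z^u$ are continuous on $\Sigma$, and $T_{\Pi(z)}X=E_z^s\oplus E_z^u$. Hence $z\mapsto\dim_{\C}E_z^s$ is locally constant, being a continuous map into the disjoint union of Grassmannians. By invariance, $\widehat{Df}_{\hat x}E^s(\hat x)=E^s(\widehat f(\hat x))$ with $\widehat{Df}_{\hat x}$ invertible, so $\dim E^s_{\sigma z}=\dim E^s_z$. Let $\mathcal O\subset\Sigma(\cC)$ be the set where $\dim E^s_z=j$; it is open, closed and $\sigma$-invariant. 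By \cref{prop:symbolic}, $m$ is the measure of maximal entropy of the irreducible shift $\Sigma(\cC)$, given by \eqref{eq:MME}. It charges every cylinder, so $\supp(m)=\Sigma(\cC)$. A nonempty open set therefore has positive measure. Two distinct values of $j$ would give two disjoint nonempty open sets of positive $m$-measure. That is consistent with ergodicity only if one of them is not invariant, but both are invariant. So one set has full measure and the other, being open and null, is empty. Hence the dimension is constant on $\Sigma(\cC)$. (Irreducibility plus local constancy and shift invariance also suffices directly, since any cylinder can be reached from any other.)

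\emph{Step 2: identifying the constant.} Since $\mu$ is $\chi_0$-hyperbolic, Oseledets' theorem applies for $\widehat\mu$-a.e.\ $\hat x$. Moreover $\widehat\mu=\widetilde\Pi_*m$ by \cref{lemma-1}, so $m$-a.e.\ $z$ maps to an Oseledets-regular $\hat x=\widetilde\Pi(z)$. At such a point, any $v\in E^s(\hat x)$ satisfies $\limsup\frac1j\log\|\widehat{Df}^{(j)}v\|\le-\chi_0/2<0$ by \eqref{eq:coding-local-contraction}. Only vectors in the Oseledets stable space $E^s_{\mathrm{Os}}(\hat x)$ have negative forward exponent, so $E^s(\hat x)\subset E^s_{\mathrm{Os}}(\hat x)$. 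Symmetrically, using backward iterates, $E^u(\hat x)\subset E^u_{\mathrm{Os}}(\hat x)$. Both pairs of subspaces span $T_{x_0}X$, so the inclusions are equalities. By \eqref{eq:dethelin-lyapunov}, $\dim E^u_{\mathrm{Os}}=p$ and $\dim E^s_{\mathrm{Os}}=k-p$.

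\emph{Step 3: conclusion.} The set of good $z$ from Step 2 has full $m$-measure, so it is nonempty. Combining with the constancy from Step 1 gives the dimensions $k-p$ and $p$ everywhere on $\Sigma(\cC)$.

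The main subtlety is the inclusion argument in Step 2. It requires the bound \eqref{eq:coding-local-contraction} to hold at every point of $\pi(\Sigma_1^\#)$, not just generically, and it requires the backward Oseledets limits of the natural extension. Both are provided by \cref{thm:mme-coding} and the setup in \cref{sec:natural-extension}.
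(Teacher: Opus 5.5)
Your proof is correct and follows essentially the same route as the paper: you identify $E^{s}_z$ and $E^{u}_z$ with the Oseledets stable and unstable spaces of $\mu$ for $m$-a.e.\ $z$, and your inclusion argument spells out what the paper leaves as ``it is easy to see''; you then use $\supp(m)=\Sigma(\cC)$ together with continuity of $z\mapsto E^{s/u}_z$ to reach every point of $\Sigma(\cC)$. Your Step~1 (constancy via shift invariance and ergodicity or irreducibility) is more than needed, since the dimension is locally constant and the full-measure set is dense in $\supp(m)=\Sigma(\cC)$, which already suffices, but it does no harm.
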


\begin{proof}
    By the discussion in \cref{sec:natural-extension}, for $\widehat{\mu}$-almost every $\hat x$, we have the decomposition $T_{\vartheta_0(\hat x)}X=E_{\mathrm{Os}}^s(\hat x)\oplus E_{\mathrm{Os}}^u(\hat x)$. Since $\widetilde\Pi_*m=\widehat\mu$, this holds for $m$-almost every $z$ with $\hat x=\widetilde\Pi(z)$. Comparing with the decomposition in \cref{thm:mme-coding}-(4), it is easy to see that 
    \[
        E_z^s=E_{\mathrm{Os}}^s(\widetilde\Pi(z)),\qquad E_z^u=E_{\mathrm{Os}}^u(\widetilde\Pi(z)).
    \]
    In particular, the asserted conclusion holds $m$-almost everywhere. Finally, since $\supp(m)=\Sigma(\cC)$, the H\"older continuity of the subspaces extends the conclusion to every point of $\Sigma(\cC)$.
\end{proof}

\begin{proof}[End of the proof of \cref{thm:main}]
Set $\chi:=\chi_0/4$. By \cref{lem:stable-index,lem:periodic-projection}, every $z\in\Fix(\sigma^n|_{\Sigma(\cC)})$ projects to a  saddle periodic point with $p$ unstable and $k-p$ stable multipliers satisfying the stated bounds. Hence $\Pi(\Fix(\sigma^n|_{\Sigma(\cC)}))\subset P_{n,\chi}(f)$. By \cref{prop:symbolic,rmk:xi}, the component periodic measures satisfy
\begin{equation}\label{eq:component-periodic-measures}
    \xi_n:=\lambda^{-n}\sum_{z\in\Fix(\sigma^n|_{\Sigma(\cC)})}\delta_z\longrightarrow m,\qquad \|\xi_n\|\longrightarrow1.
\end{equation}
Continuity of $\Pi$ gives $\zeta_n:=\Pi_*\xi_n\longrightarrow\mu$ and $\|\zeta_n\|\to1$. By \cref{lem:periodic-projection}, the points counted by $\zeta_n$ are distinct and have multiplicity one. For any sequence $P_{n,\chi}(f)\subset Q_n\subset P_n(f)$, define $\eta_n:=\lambda^{-n}\sum_{x\in Q_n}\delta_x$, where points are counted with multiplicity. Since all points counted by $\zeta_n$ belong to $P_{n,\chi}(f)$, we have $0\leq\zeta_n\leq\eta_n$. Therefore, \eqref{eq:criterion-upper} gives
\[
    0\leq\|\eta_n-\zeta_n\|\leq\lambda^{-n}\#P_n(f)-\|\zeta_n\|\longrightarrow0.
\]
It follows that $\eta_n\to\mu$, proving \eqref{eq:equidist}.
\end{proof}

The recent work of Hu \cite{Hu26} on the growth of dynamical degrees of a meromorphic self-map, combined with the machinery we just used, allows us to prove a finiteness result of the MMEs. 

\begin{proof}[Proof of \cref{prop:finite MMEs}]

Let $\Gamma_n$ be the graph of $f^n$ in $X\times X$. By \cite[Corollary~5.1]{Hu26}, we have $\|[\Gamma_n]\|=O(\lambda^n)$. The intersection estimate in \cite[Proposition 3.1]{Vu21IMRN}, applied to $\Gamma_n$ and the diagonal, therefore gives $\#P_n(f)\leq C\lambda^n$ for some $C>0$ and every $n\geq1$.

Let $\nu_1,\ldots,\nu_r$ be any finite collection of distinct adapted ergodic invariant probability measures of entropy $\log\lambda$. The arguments in the proof of \cref{thm:main} give a common countable Markov shift $(\Sigma,\sigma)$ satisfying
\begin{equation}\label{eq:finite-period}
    \#\Fix(\sigma^n)\leq\#P_n(f)\leq C\lambda^n,
\end{equation}
together with ergodic lifts $m_i$ satisfying $\Pi_*m_i=\nu_i$ and $h_{m_i}(\sigma)=\log\lambda$ for every $1\leq i\leq r$. Since $m_i$ is ergodic, \cref{lem:ergodic-component} gives an irreducible Markov subshift $\Sigma(\cC_i)$ such that $m_i(\Sigma(\cC_i))=1$. As in the proof of \cref{lemma:unique-ergodic}, the upper bound \eqref{eq:finite-period} implies that its Perron value is $\lambda$ and that $m_i$ is the unique measure of maximal entropy on $\Sigma(\cC_i)$ for every $i$. We deduce that the $\Sigma(\cC_i)$'s are disjoint from each other.

Let $q_i$ be the period of $\Sigma(\cC_i)$, and let $q$ be a common multiple of these periods. By \cref{lemma:lower bound} and the disjointness,
\[
    r\leq\sum_{i=1}^r q_i\leq\liminf_{\substack{n\to\infty\\q\mid n}}\lambda^{-n}\#\Fix(\sigma^n)\leq C.
\]
Since $C$ is independent of the chosen finite collection, there are only finitely many such measures.
\end{proof}

\section{Applications}\label{sec:applications}

We give a more detailed description about each of the dynamical systems mentioned in the introduction. We then prove \cref{prop:applications}.

\subsection{Maps of surfaces with small topological degree}
\label{sec:applications-surfaces}

Let $f\colon X\dashrightarrow X$ be a dominant meromorphic map of a smooth connected projective surface with $\lambda:=d_1(f)>d_2(f)$. We consider the setting of Diller--Dujardin--Guedj \cite{DDG1Indiana,DDG2CMH,DDG3ENS}; see also \cite{DF01AJM}. Assume that
\begin{enumerate}
    \item[\textup{(H1)}] The map $f$ is algebraically stable, i.e., $(f^n)^*=(f^*)^n$ on $H^{1,1}(X)$ for every $n\geq1$.
    \item[\textup{(H2)}] The map $f$ has finite dynamical energy in the sense of \cite{DDG2CMH}.
\end{enumerate}

Let $\omega_X$ be a Kähler form on $X$. Under \textup{(H1)}, the limits
\[
    T^+:=\lim_{n\to\infty}\lambda^{-n}(f^n)^*\omega_X,\qquad T^-:=\lim_{n\to\infty}\lambda^{-n}(f^n)_*\omega_X
\]
exist in the sense of currents. They are nonzero positive closed $(1,1)$-currents satisfying
\[
    f^*T^+=\lambda T^+,\qquad f_*T^-=\lambda T^-.
\]
We rescale $\omega_X$ so that the cohomological intersection $[T^+]\cdot[T^-]$ equals $1$. Assumption \textup{(H2)} ensures that the wedge product
\[
    \mu:=T^+\wedge T^-
\]
is well-defined and is a probability measure. This measure is invariant, mixing, and satisfies
\[
    h_\mu(f)=h_{\mathrm{top}}(f)=\log\lambda.
\]
Its natural extension has local product structure and is Bernoulli.

We further assume that
\begin{enumerate}
    \item[\textup{(H3)}] The measure $\mu$ is adapted, i.e., $\log\dist(\cdot,\cS)\in L^1(\mu)$.
\end{enumerate}

\subsection{Dominant rational maps on projective space}
\label{sec:applications-projective}

Let $f\colon\Pb^k\dashrightarrow\Pb^k$ be a dominant rational map. Normalize the Fubini--Study form $\omega$ by $\int_{\Pb^k}\omega^k=1$, and put
\[
    \delta_j(f) :=\int_{\Pb^k}f^*(\omega^j)\wedge\omega^{k-j}, \qquad d_j(f):=\lim_{n\to\infty}\delta_j(f^n)^{1/n}.
\]
Suppose that, for some $1\leq p\leq k-1$, the map $f$ is $p$-algebraically stable and $\lambda:=d_p(f)>\max_{j\ne p}d_j(f)$. Since $H^{p,p}(\Pb^k)$ is one-dimensional, we have
\begin{equation}\label{eq:projective-stability}
    \delta_p(f^n)=\delta_p(f)^n=\lambda^n.
\end{equation}

An important subclass consists of the birational maps studied by de Th\'elin--Vigny in \cite{dTH-Vigny}. Let
\[
    d:=\deg f\geq2, \qquad e:=\deg f^{-1}, \qquad I^+:=I(f), \qquad I^-:=I(f^{-1}).
\]
Assume that, for some $1\leq s\leq k-1$,
\begin{equation}\label{eq:dtv-dimensions}
    \dim I^+=k-s-1, \qquad \dim I^-=s-1,
\end{equation}
and that
\begin{equation}\label{eq:dtv-separation}
    \left(\bigcup_{n\geq0}f^{-n}(I^+)\right) \cap \left(\bigcup_{n\geq0}f^n(I^-)\right) =\varnothing.
\end{equation}
We also impose their finite-energy condition. Choose negative quasi-potentials $u^\pm$ satisfying
\[
    d^{-1}f^*\omega=\omega+dd^cu^+, \qquad e^{-1}(f^{-1})^*\omega=\omega+dd^cu^-,
\]
and assume
\begin{equation}\label{eq:dtv-energy}
    \begin{aligned}
        &\sum_{n\geq0}d^{-sn}\int_{[f^n(I^-)]}u^+\,f^*(\omega^{s-1})>-\infty,\\
        &\sum_{n\geq0}e^{-(k-s)n}\int_{[f^{-n}(I^+)]}u^-\,(f^{-1})^*(\omega^{k-s-1})>-\infty.
    \end{aligned}
\end{equation}
The integration on the cycles are computed with multiplicity. This is the condition in \cite[Definition~3.1.9]{dTH-Vigny}, up to fixed positive normalization factors; see also \cite[equation~(1)]{DV24}.

Under these assumptions, $f$ is $j$-algebraically stable for every $j$, and 
\begin{equation}\label{eq:dtv-degrees}
    d_j(f)=
    \begin{cases}
        d^j,&0\leq j\leq s,\\
        e^{k-j},&s\leq j\leq k,
    \end{cases}
\end{equation}
where $d^s=e^{k-s}$; see \cite[Proposition~3.1.5 and Lemma~3.1.6]{dTH-Vigny}. Thus $\lambda:=d_s(f)=d^s$ is strictly dominant. Under these hypotheses, the limits
\[
    T_s^+:=\lim_{n\to\infty}\lambda^{-n}(f^n)^*(\omega^s),\qquad T_{k-s}^-:=\lim_{n\to\infty}\lambda^{-n}(f^n)_*(\omega^{k-s})
\]
exist in the sense of currents. They are positive closed currents of bidegrees $(s,s)$ and $(k-s,k-s)$, respectively, both of mass one, satisfying
\[
    f^*T_s^+=\lambda T_s^+,\qquad f_*T_{k-s}^-=\lambda T_{k-s}^-.
\]
Their wedge product
\begin{equation}\label{eq:dtv-measure}
    \mu:=T_s^+\wedge T_{k-s}^-,
\end{equation}
is well-defined. It is an invariant mixing probability measure with entropy $\log\lambda$; see \cite[Theorems~3.4.1, 3.4.15 and~3.4.19]{dTH-Vigny}. Its adaptedness follows from the argument preceding \cite[Theorem 3.4.21]{dTH-Vigny}.

\begin{proof}[Proof of \cref{prop:applications}]

For the maps considered in \cref{sec:applications-surfaces}, we have $p=1$ and $d_1(f)>d_2(f)$, so it follows from \cite[Theorem 1.2]{DNT17BLMS}.

For \cref{sec:applications-projective}, let $\Gamma_n$ be the closure of the graph of $f^n$, and let $\Delta$ be the diagonal in $\Pb^k\times\Pb^k$. For $A\in\operatorname{PGL}(k+1,\mathbb C)$, set
\[
    \Delta_A:=\{(x,Ax):x\in\Pb^k\}.
\]
For general $A$ arbitrarily close to the identity, $\Gamma_n\cap\Delta_A$ is a proper intersection. Indeed, for each $(x,y)\in\Gamma_n$, $\{ A\in \operatorname{PGL}(k+1,\mathbb C) | Ax=y \}$ is a subvariety of codimension $k$ in $\operatorname{PGL}(k+1,\mathbb C)$. Therefore, the corresponding incidence variety
$$
\{(A, x, y)| A\in \operatorname{PGL}(k+1,\mathbb C),\  (x,y)\in \Gamma_n,\  Ax=y  \}
$$ has the same dimension as $\operatorname{PGL}(k+1,\mathbb C)$, and hence its general fiber over $\operatorname{PGL}(k+1,\mathbb C)$ is zero-dimensional or empty.

By conservation of local intersection multiplicities, each isolated intersection of $\Gamma_n$ with $\Delta$ persists under a sufficiently small perturbation, with the same total multiplicity in a small neighborhood. Here we use $[\cdot]$ to denote the cohomology class. Since $[\Delta_A]=[\Delta]$, it follows that $\#P_n(f)\leq[\Gamma_n]\cdot[\Delta]$. Writing $\alpha=\pi_1^*[\omega]$ and $\beta=\pi_2^*[\omega]$, we have $[\Delta]=\sum_{j=0}^k\alpha^j\beta^{k-j}$. Pairing with $[\Gamma_n]$ gives
\[
    \#P_n(f)\leq\sum_{j=0}^k\int_{\Gamma_n}\pi_1^*(\omega^j)\wedge\pi_2^*(\omega^{k-j})=\sum_{j=0}^k\delta_j(f^n).
\]
By \eqref{eq:projective-stability}, $\delta_p(f^n)=\lambda^n$. Choose $\theta$ such that $\max_{j\ne p}d_j(f)<\theta<\lambda$. The definition of dynamical degrees gives $\delta_j(f^n)=o(\theta^n)$ for every $j\ne p$. Consequently,
\[
    \#P_n(f)\leq\lambda^n+o(\theta^n) =\lambda^n+o(\lambda^n).
\]
Thus \cref{thm:main} applies to both cases. 
\end{proof}

The canonical measures recalled in the preceding subsections are adapted and have entropy $\log\lambda$. Thus \cref{thm:main} resolves the open problems concerning their properties stated after \cref{prop:applications}.

\appendix
\section*{Appendix: Verification of (A1)--(A7)}
\phantomsection
\label[appendix]{app:estimates}

\setcounter{equation}{0}
\renewcommand{\theequation}{\arabic{equation}}
\renewcommand{\theHequation}{appendix.\arabic{equation}}

We verify conditions \textup{(A1)--(A7)} of \cite[\S3.1]{LOP26JEMS}. The same conditions are also used in \cite{ALP24MAMS}. So, we will use the results of both papers freely. When $\cS=\varnothing$, the results of \cref{thm:mme-coding} is true without \textup{(A1)--(A7)}. Therefore, we will assume $\cS\neq\varnothing$. Recall that $U=X\setminus\cS$ and $\rho(x)=\min\{1,\dist(x,\cS)\}$. The estimates in the proof of \cite[Lemma~7]{deThelin08Invent} give constants $C_0\geq1$ and $b>0$ such that
\begin{equation}\label{eq:derivative-growth}
    \|Df_x\|+\|\nabla Df_x\|+\|(Df_x)^{-1}\|\leq C_0\rho(x)^{-b},\qquad x\in U.
\end{equation}
Here $\nabla Df$ denotes the covariant derivative of $Df$ with respect to the fixed K\"ahler metric. 

After a constant rescaling of the metric, we may suppose that $\diam X\leq r_0<1$, so that $\rho(x)=\dist(x,\cS)\leq r_0$. Conditions \textup{(A1)--(A4)}, concerning exponential coordinates and their derivatives, are satisfied when the curvature tensor of $X$ is globally bounded; see the discussion in \cite[\S1.1]{ALP24MAMS} after condition \textup{(A4)} is introduced. Since $X$ is compact and smooth, this is automatically true. For simplicity, we omit these conditions here. 

It remains to verify \textup{(A5)--(A7)}. These assumptions are about the regularity of $f$. As mentioned in both \cite{LOP26JEMS,ALP24MAMS}, these conditions hold because of \eqref{eq:derivative-growth}. For completeness, we include here their full statements and the proofs.

Let $P_{u,v}\colon T_uX\to T_vX$ denote parallel transport along the unique minimizing geodesic in a convex normal neighborhood. For $x\in U$, let $g_x$ denote the local inverse branch of $f$ satisfying $g_x(f(x))=x$. To compare derivatives at different points, fix $x\in U$, for $y$ and $z$ in small convex normal neighborhood of $x$ and $f(x)$ respectively, we define
\begin{equation}\label{eq:parallel transport}
    \begin{aligned}
        \widetilde{Df}_y&:=P_{f(y),f(x)}\circ Df_y\circ P_{x,y}\colon T_xX\longrightarrow T_{f(x)}X\\
        (\widetilde{Dg_x})_z&:=P_{g_x(z),x}\circ D(g_x)_z\circ P_{f(x),z}\colon T_{f(x)}X\longrightarrow T_xX.
    \end{aligned}
\end{equation}

We now state \textup{(A5)--(A7)}. \medskip

\noindent\textbf{Regularity of $f$:} there exist constants $a>1$, $\beta>0$, and $K>1$ such that, for every $x$ such that both $x$, $f(x)\notin\cS$, there is $r(x)$ such that $\min\{\rho(x)^a,\rho(f(x))^a\}<r(x)<1$ for which, setting $D_x:=B(x,2r(x))$ and $E_x:=B(f(x),2r(x))$, the following hold.
\begin{enumerate}[label=\textup{(A\arabic*)},start=5]
    \item The restriction $f|_{D_x}$ is a diffeomorphism onto its image, and the inverse branch $g_x$ taking $f(x)$ to $x$ is a diffeomorphism from $E_x$ onto its image.

    \item The derivatives satisfy
    \[
        \rho(x)^a\leq\|Df_y\|\leq\rho(x)^{-a}\qquad(y\in D_x),
    \]
    and
    \[
        \rho(x)^a\leq\|D(g_x)_z\|\leq\rho(x)^{-a}\qquad(z\in E_x).
    \]

    \item The derivatives, identified by parallel transport as above, satisfy
    \[
        \|\widetilde{Df}_{y_1}-\widetilde{Df}_{y_2}\|\leq K\dist(y_1,y_2)^\beta\qquad(y_1,y_2\in D_x),
    \]
    and
    \[
        \|(\widetilde{Dg_x})_{z_1}-(\widetilde{Dg_x})_{z_2}\|\leq K\dist(z_1,z_2)^\beta\qquad(z_1,z_2\in E_x).
    \]
\end{enumerate}

\begin{proof}
For $x\in U$ and $y\in B(x,\rho(x)/2)$, by \eqref{eq:derivative-growth} we have 
\[
     \|Df_y\|+\|\nabla Df_y\|+\|(Df_y)^{-1}\|\leq C_0\rho(y)^{-b}\leq 2^bC_0 \rho(x)^{-b}.
\]
Therefore, the radius of a ball on which the inverse branch taking $f(x)$ to $x$ is defined can be bounded below by a positive power of $\rho(x)$.; see the proof of \cite[Lemma~2]{Briend-Duval99} and \cite{DD04JGA}. More precisely, we have constants $c>0$ and $N>1$, independent of $x$, such that $f$ is injective on $B(x,2c\rho(x)^N)$ and the inverse branch $g_x$ taking $f(x)$ to $x$ is defined on $B(f(x),2c\rho(x)^N)$, with image contained in $B(x,\rho(x)/2)$. Increasing $N$ and decreasing $c$ if necessary, we may also ensure that all the parallel transports in \eqref{eq:parallel transport} are well-defined in convex normal neighbourhoods.

Differentiating $f\circ g_x=\mathrm{id}$ gives $D(g_x)_z=(Df_{g_x(z)})^{-1}$ and
\[
    (\nabla D(g_x))_z(u,v)=-(Df_{g_x(z)})^{-1}(\nabla Df)_{g_x(z)}\bigl(D(g_x)_z u,D(g_x)_z v\bigr).
\]
Together with \eqref{eq:derivative-growth} and $\rho(g_x(z))\geq\rho(x)/2$, these identities give polynomial bounds for the first and second derivatives of $g_x$.

Consequently, after increasing $N$ and decreasing $c$ if necessary, there is a constant $C\geq1$, independent of $x$, such that, for any radius $0<r(x)\leq c\rho(x)^N$, the sets $D_x:=B(x,2r(x))$ and $E_x:=B(f(x),2r(x))$ satisfy
\[
\begin{aligned}
    C^{-1}\rho(x)^N&\leq\|Df_y\|\leq C\rho(x)^{-N} &&(y\in D_x),\\
    C^{-1}\rho(x)^N&\leq\|D(g_x)_z\|\leq C\rho(x)^{-N} &&(z\in E_x),
\end{aligned}
\]
and
\[
    \operatorname{Lip}_{D_x}(\widetilde{Df})+\operatorname{Lip}_{E_x}(\widetilde{Dg_x})\leq C\rho(x)^{-N}.
\]
The lower bounds of the first derivatives follow from $\|A\|\geq\|A^{-1}\|^{-1}$. For the Lipschitz bounds, differentiating \eqref{eq:parallel transport} and using the uniform bounds for the derivatives of parallel transport gives, after enlarging $C$ if necessary,
\[
\|d(\widetilde{Df})_y\|\le C\bigl(\|\nabla Df_y\|+\|Df_y\|+\|Df_y\|^2\bigr),
\]
where the last two terms arise from differentiating the parallel transports. Here $d(\widetilde{Df})_y$ denotes the differential at $y$ of the map $y\mapsto\widetilde{Df}_y$. By \eqref{eq:derivative-growth}, all these terms are bounded by negative powers of $\rho(x)$. Integrating along minimizing geodesics in $D_x$ gives the stated Lipschitz bound for $\widetilde{Df}$ after increasing $N$ if necessary. The same argument, using the bounds for $Dg_x$ and $\nabla Dg_x$ obtained above, gives the Lipschitz estimate for $\widetilde{Dg_x}$ on $E_x$.

Now suppose also that $f(x)\notin\cS$, and set $\rho_*(x):=\min\{\rho(x),\rho(f(x))\}$. Since $0<\rho(x)\leq r_0<1$, choose $a>2N$ sufficiently large that
\[
    2r_0^{a-N}\leq c,\qquad Cr_0^{a-N}\leq1,\qquad 2r_0^a<1.
\]
Taking $r(x):=2\rho_*(x)^a$, we obtain
\[
    \min\{\rho(x)^a,\rho(f(x))^a\}<r(x)<1,\qquad r(x)\leq c\rho(x)^N.
\]
Thus $f|_{D_x}$ and $g_x|_{E_x}$ are diffeomorphisms onto their images, proving \textup{(A5)}. The choice $Cr_0^{a-N}\leq1$ absorbs the constant $C$ in the two-sided derivative estimates and gives \textup{(A6)}; see also \cite[Remark~1.7]{ALP24MAMS}.

Finally, for $y_1,y_2\in D_x$, we have $\dist(y_1,y_2)\leq4r(x)=8\rho_*(x)^a$, and hence
\[
\begin{aligned}
    \|\widetilde{Df}_{y_1}-\widetilde{Df}_{y_2}\|
    &\leq C\rho(x)^{-N}\dist(y_1,y_2)\\
    &\leq \sqrt{8}\,C\rho(x)^{-N}\rho_*(x)^{a/2}\dist(y_1,y_2)^{1/2}\\
    &\leq \sqrt{8}\,C\dist(y_1,y_2)^{1/2}.
\end{aligned}
\]
The same argument gives
\[
    \|(\widetilde{Dg_x})_{z_1}-(\widetilde{Dg_x})_{z_2}\|\leq\sqrt{8}\,C\dist(z_1,z_2)^{1/2}\qquad(z_1,z_2\in E_x).
\]
Thus \textup{(A7)} holds with $\beta=1/2$ and $K=\sqrt{8}\,C$.
\end{proof}

\printbibliography[heading=bibliography,title={References}]

@article{Ovadia18JMD,
 author = {Ben Ovadia, Snir},
 title = {Symbolic dynamics for non-uniformly hyperbolic diffeomorphisms of compact smooth manifolds},
 fjournal = {Journal of Modern Dynamics},
 journal = {J. Mod. Dyn.},
 volume = {13},
 pages = {43--113},
 year = {2018},
}

@article{Guedj05ETDS,
 author = {Guedj, Vincent},
 title = {Topological entropy of meromorphic applications},
 fjournal = {Ergodic Theory and Dynamical Systems},
 journal = {Ergodic Theory Dyn. Syst.},
 volume = {25},
 number = {6},
 pages = {1847--1855},
 year = {2005},
}

@article{Bayraktar13JGA,
 author = {Bayraktar, Turgay},
 title = {Green currents for meromorphic maps of compact {K{\"a}hler} manifolds},
 fjournal = {The Journal of Geometric Analysis},
 journal = {J. Geom. Anal.},
 volume = {23},
 number = {2},
 pages = {970--998},
 year = {2013},
}

@article{BD01IHES,
  author        = {Briend, Jean-Yves and Duval, Julien},
  title         = {Deux caract\'erisations de la mesure d'\'equilibre d'un endomorphisme de {$\mathbb{P}^k(\mathbb{C})$}},
  journal       = {Publ. Math. Inst. Hautes {\'E}tudes Sci.},
  volume        = {93},
  year          = {2001},
  pages         = {145--159},
}

@article{Vigny14AIF,
  author        = {Vigny, Gabriel},
  title         = {Hyperbolic measure of maximal entropy for generic rational maps of {$\mathbb{P}^k$}},
  journal       = {Ann. Inst. Fourier (Grenoble)},
  volume        = {64},
  number        = {2},
  year          = {2014},
  pages         = {645--680},
}

@book{ALP24MAMS,
 author = {Araujo, Ermerson and Lima, Yuri and Poletti, Mauricio},
 title = {Symbolic dynamics for nonuniformly hyperbolic maps with singularities in high dimension},
 fseries = {Memoirs of the American Mathematical Society},
 series = {Mem. Am. Math. Soc.},
 volume = {1511},
 year = {2024},
 publisher = {Providence, RI: American Mathematical Society (AMS)},
}

@article{Buzzi20JMD,
 author = {Buzzi, J{\'e}r{\^o}me},
 title = {The degree of {Bowen} factors and injective codings of diffeomorphisms},
 fjournal = {Journal of Modern Dynamics},
 journal = {J. Mod. Dyn.},
 volume = {16},
 pages = {1--36},
 year = {2020},
}

@article{BCS22Annals,
 author = {Buzzi, J{\'e}r{\^o}me and Crovisier, Sylvain and Sarig, Omri},
 title = {Measures of maximal entropy for surface diffeomorphisms},
 fjournal = {Annals of Mathematics. Second Series},
 journal = {Ann. Math. (2)},
 volume = {195},
 number = {2},
 pages = {421--508},
 year = {2022},
}

@article{LOP26JEMS,
  author = {Lima, Yuri and Obata, Davi and Poletti, Mauricio},
  title = {Measures of maximal entropy for non-uniformly hyperbolic maps},
  journal = {J. Eur. Math. Soc.},
  year = {2026},
  note = {Published online first},
}

@article{Buzzi09ETDS,
 author = {Buzzi, J{\'e}r{\^o}me},
 title = {Maximal entropy measures for piecewise affine surface homeomorphisms},
 fjournal = {Ergodic Theory and Dynamical Systems},
 journal = {Ergodic Theory Dyn. Syst.},
 volume = {29},
 number = {6},
 pages = {1723--1763},
 year = {2009},
}

@article{Katok07JMD,
 author = {Katok, Anatole},
 title = {Fifty years of entropy in dynamics: 1958 -- 2007},
 fjournal = {Journal of Modern Dynamics},
 journal = {J. Mod. Dyn.},
 volume = {1},
 number = {4},
 pages = {545--596},
 year = {2007},
}

@article{Sarig13JAMS,
 author = {Sarig, Omri M.},
 title = {Symbolic dynamics for surface diffeomorphisms with positive entropy},
 fjournal = {Journal of the American Mathematical Society},
 journal = {J. Am. Math. Soc.},
 volume = {26},
 number = {2},
 pages = {341--426},
 year = {2013},
}

@article{DNT17BLMS,
  author = {Dinh, Tien-Cuong and Nguy{\^e}n, Vi{\^e}t-Anh and Truong, Tuyen Trung},
  title = {Growth of the number of periodic points for meromorphic maps},
  journal = {Bull. Lond. Math. Soc.},
  volume = {49},
  number = {6},
  pages = {947--964},
  year = {2017},
}

@article{deThelin08Invent,
  author = {{de Th{\'e}lin}, Henry},
  title = {Sur les exposants de {Lyapounov} des applications m{\'e}romorphes},
  journal = {Invent. Math.},
  volume = {172},
  number = {1},
  pages = {89--116},
  year = {2008},
  shorthand={dT08},
}

@article{Sarig11JMD,
  author = {Sarig, Omri M.},
  title = {{Bernoulli} equilibrium states for surface diffeomorphisms},
  journal = {J. Mod. Dyn.},
  volume = {5},
  number = {3},
  pages = {593--608},
  year = {2011},
}

@book {ABeBrDScSi10Book,
    AUTHOR = {Abate, Marco and Bedford, Eric and Brunella, Marco and Dinh,
              Tien-Cuong and Schleicher, Dierk and Sibony, Nessim},
     TITLE = {Holomorphic {D}ynamical {S}ystems},
    SERIES = {Lecture Notes in Mathematics},
    VOLUME = {1998},
 PUBLISHER = {Springer-Verlag, Berlin; Fondazione C.I.M.E., Florence},
      YEAR = {2010},
}

@article{DS05Annals,
  author        = {Dinh, Tien-Cuong and Sibony, Nessim},
  title         = {Une borne sup\'erieure pour l'entropie topologique d'une application rationnelle},
  journal       = {Ann. of Math. (2)},
  volume        = {161},
  number        = {3},
  year          = {2005},
  pages         = {1637--1644},
}

@article{Cantat01Acta,
  author        = {Cantat, Serge},
  title         = {Dynamique des automorphismes des surfaces {K3}},
  journal       = {Acta Math.},
  volume        = {187},
  number        = {1},
  year          = {2001},
  pages         = {1--57},
}

@article{Guedj05Annals,
 author = {Guedj, Vincent},
 title = {Ergodic properties of rational mappings with large topological degree},
 fjournal = {Annals of Mathematics. Second Series},
 journal = {Ann. Math. (2)},
 volume = {161},
 number = {3},
 pages = {1589--1607},
 year = {2005},
}

@article{DS05JFA,
  author        = {Dinh, Tien-Cuong and Sibony, Nessim},
  title         = {Dynamics of regular birational maps in {$\mathbb{P}^{k}$}},
  journal       = {J. Funct. Anal.},
  volume        = {222},
  number        = {1},
  year          = {2005},
  pages         = {202--216},
}

@article{BD26AM,
 author = {Bianchi, Fabrizio and Dinh, Tien-Cuong},
 title = {Exponential mixing of all orders and {CLT} for automorphisms of compact {K{\"a}hler} manifolds},
 fjournal = {Advances in Mathematics},
 journal = {Adv. Math.},
 volume = {485},
 pages = {21},
 note = {Id/No 110689},
 year = {2026},
}

@article{Briend-Duval99,
  AUTHOR = {Briend, Jean-Yves and Duval, Julien},
     TITLE = {Exposants de {L}iapounoff et distribution des points
              p\'eriodiques d'un endomorphisme de {$\mathbb {CP}^k$}},
   JOURNAL = {Acta Math.},
  FJOURNAL = {Acta Mathematica},
    VOLUME = {182},
      YEAR = {1999},
    NUMBER = {2},
     PAGES = {143--157},
}

@article{BLS93a,
 author = {Bedford, Eric and Lyubich, Mikhail and Smillie, John},
 title = {Polynomial diffeomorphisms of \texorpdfstring{$\mathbb {C}^2$}{c2}. {IV}: {The} measure of maximal entropy and laminar currents},
 fjournal = {Inventiones Mathematicae},
 journal = {Invent. Math.},
 volume = {112},
 number = {1},
 pages = {77--125},
 year = {1993},
}

@article{BLS93b,
 author = {Bedford, Eric and Lyubich, Mikhail and Smillie, John},
 title = {Distribution of periodic points of polynomial diffeomorphisms of \texorpdfstring{$\mathbb {C}^2$}{c2}},
 fjournal = {Inventiones Mathematicae},
 journal = {Invent. Math.},
 volume = {114},
 number = {2},
 pages = {277--288},
 year = {1993},
}

@article{BS91,
 author = {Bedford, Eric and Smillie, John},
 title = {Polynomial diffeomorphisms of \texorpdfstring{$\mathbb {C}^2$}{c2}: {Currents}, equilibrium measure and hyperbolicity},
 fjournal = {Inventiones Mathematicae},
 journal = {Invent. Math.},
 volume = {103},
 number = {1},
 pages = {69--99},
 year = {1991},
}

@article{BS92,
 author = {Bedford, Eric and Smillie, John},
 title = {Polynomial diffeomorphisms of \texorpdfstring{$\mathbb {C}^2$}{c2}. {III}: {Ergodicity}, exponents and entropy of the equilibrium measure},
 fjournal = {Mathematische Annalen},
 journal = {Math. Ann.},
 volume = {294},
 number = {3},
 pages = {395--420},
 year = {1992},
}

@book{dTH-Vigny,
 author = {{de Th{\'e}lin}, Henry and Vigny, Gabriel},
 title = {Entropy of meromorphic maps and dynamics of birational maps},
 fseries = {M{\'e}moires de la Soci{\'e}t{\'e} Math{\'e}matique de France. Nouvelle S{\'e}rie},
 series = {M{\'e}m. Soc. Math. Fr., Nouv. S{\'e}r.},
 volume = {122},
 year = {2010},
 publisher = {Soci{\'e}t{\'e} Math{\'e}matique de France (SMF), Paris},
  shorthand={dTV10},
}

@article{dTh10,
 author = {{de Th{\'e}lin}, Henry},
 title = {On the regular automorphisms of \texorpdfstring{$\mathbb {C}^k$}{ck}.},
 fjournal = {Publicacions Matem{\`a}tiques},
 journal = {Publ. Mat., Barc.},
 volume = {54},
 number = {1},
 pages = {243--262},
 year = {2010},
  shorthand={dT10},
}

@article{DF01AJM,
 author = {Diller, J. and Favre, C.},
 title = {Dynamics of bimeromorphic maps of surfaces},
 fjournal = {American Journal of Mathematics},
 journal = {Am. J. Math.},
 volume = {123},
 number = {6},
 pages = {1135--1169},
 year = {2001},
}

@article{DDG1Indiana,
 author = {Diller, Jeffrey and Dujardin, Romain and Guedj, Vincent},
 title = {Dynamics of meromorphic maps with small topological degree. {I}: from cohomology to currents},
 fjournal = {Indiana University Mathematics Journal},
 journal = {Indiana Univ. Math. J.},
 volume = {59},
 number = {2},
 pages = {521--562},
 year = {2010},
}

@article{DDG2CMH,
 author = {Diller, Jeffrey and Dujardin, Romain and Guedj, Vincent},
 title = {Dynamics of meromorphic mappings with small topological degree. {II}: {Energy} and invariant measure},
 fjournal = {Commentarii Mathematici Helvetici},
 journal = {Comment. Math. Helv.},
 volume = {86},
 number = {2},
 pages = {277--316},
 year = {2011},
}

@article{DDG3ENS,
 author = {Diller, Jeffrey and Dujardin, Romain and Guedj, Vincent},
 title = {Dynamics of meromorphic maps with small topological degree. {III}: {Geometric} currents and ergodic theory},
 fjournal = {Annales Scientifiques de l'{\'E}cole Normale Sup{\'e}rieure. Quatri{\`e}me S{\'e}rie},
 journal = {Ann. Sci. {\'E}c. Norm. Sup{\'e}r. (4)},
 volume = {43},
 number = {2},
 pages = {235--278},
 year = {2010},
}

@misc{DDK25,
    title={Exponential equidistribution of periodic points of endomorphisms of $\mathbb{P}^k$},
    author={{de Th\'elin}, Henry and Dinh, Tien-Cuong and Kaufmann, Lucas},
    year={2025},
    eprint={2409.19787},
    archivePrefix={arXiv},
     shorthand={dTDK25},
}

@misc{TDK26,
    title={Quantitative Dynamics of complex {H}\'enon maps},
    author={{de Th\'elin}, Henry and Dinh, Tien-Cuong and Kaufmann, Lucas},
    year={2026},
    eprint={2609.36105},
    shorthand={dTDK26},
    archivePrefix={arXiv},
}

@misc{Hu26,
    title={Degree growth of rational and meromorphic self-maps},
    author={Hu, Fei},
    year={2026},
    eprint={2610.18094},
    archivePrefix={arXiv},
}

@article{Vu21IMRN,
 author = {Vu, Duc-Viet},
 title = {Densities of currents on non-{K{\"a}hler} manifolds},
 fjournal = {IMRN. International Mathematics Research Notices},
 journal = {Int. Math. Res. Not.},
 volume = {2021},
 number = {17},
 pages = {13282--13304},
 year = {2021},
}

@article{dTH-Dinh12AM,
 author = {{de Th{\'e}lin}, Henry and Dinh, Tien-Cuong},
 title = {Dynamics of automorphisms on compact {K{\"a}hler} manifolds},
 fjournal = {Advances in Mathematics},
 journal = {Adv. Math.},
 volume = {229},
 number = {5},
 pages = {2640--2655},
 year = {2012},
 shorthand={dTD12},
}

@article{DNT15,
 author = {Tien-Cuong Dinh and Vi{\^e}t Anh Nguy{\^e}n and Tuyen Trung Truong},
 title = {Equidistribution for meromorphic maps with dominant topological degree},
 fjournal = {Indiana University Mathematics Journal},
 journal = {Indiana Univ. Math. J.},
 volume = {64},
 number = {6},
 pages = {1805--1828},
 year = {2015},
}

@article {DS05JAMS,
    AUTHOR = {Dinh, Tien-Cuong and Sibony, Nessim},
     TITLE = {Green currents for holomorphic automorphisms of compact
              {K}\"ahler manifolds},
   JOURNAL = {J. Amer. Math. Soc.},
  FJOURNAL = {Journal of the American Mathematical Society},
    VOLUME = {18},
      YEAR = {2005},
    NUMBER = {2},
     PAGES = {291--312},
}

@article {DS06CMH,
    AUTHOR = {Dinh, Tien-Cuong and Sibony, Nessim},
     TITLE = {Distribution des valeurs de transformations m\'eromorphes et
              applications},
   JOURNAL = {Comment. Math. Helv.},
  FJOURNAL = {Commentarii Mathematici Helvetici. A Journal of the Swiss
              Mathematical Society},
    VOLUME = {81},
      YEAR = {2006},
    NUMBER = {1},
     PAGES = {221--258},
}

@article {DS10JAG,
    AUTHOR = {Dinh, Tien-Cuong and Sibony, Nessim},
     TITLE = {Super-potentials for currents on compact {K}\"ahler manifolds
              and dynamics of automorphisms},
   JOURNAL = {J. Algebraic Geom.},
  FJOURNAL = {Journal of Algebraic Geometry},
    VOLUME = {19},
      YEAR = {2010},
    NUMBER = {3},
     PAGES = {473--529},
}

@article{DS16MAnn,
  author  = {Dinh, Tien-Cuong and Sibony, Nessim},
  title   = {Equidistribution of saddle periodic points for 
             {H\'enon}-type automorphisms of 
             {\texorpdfstring{$\mathbb{C}^k$}{C^k}}},
  fjournal = {Mathematische Annalen},
  journal  = {Math. Ann.},
  volume   = {366},
  number   = {3-4},
  pages    = {1207--1251},
  year     = {2016},
}

@incollection{Dujardin22,
 author = {Dujardin, Romain},
 title = {Geometric methods in holomorphic dynamics},
 booktitle = {International congress of mathematicians 2022, ICM 2022, Helsinki, Finland, virtual, July 6--14, 2022. Volume 5. Sections 9--11},
 pages = {3460--3482},
 year = {2023},
 publisher = {Berlin: European Mathematical Society (EMS)},
}

@article{Dujardin06duke,
 author = {Dujardin, Romain},
 title = {Laminar currents and birational dynamics},
 fjournal = {Duke Mathematical Journal},
 journal = {Duke Math. J.},
 volume = {131},
 number = {2},
 pages = {219--247},
 year = {2006},
}

@article{LZ25,
  author  = {Luo, Muhan and Zhou, Qi},
  title   = {Equidistribution of saddle periodic points for {H}{\'e}non-like maps},
  journal = {J. Eur. Math. Soc. (JEMS)},
  note    = {to appear},
}

@book{Kitchens,
 author = {Kitchens, Bruce P.},
 title = {Symbolic dynamics. {One}-sided, two-sided and countable state {Markov} shifts},
 fseries = {Universitext},
 series = {Universitext},
 year = {1998},
 publisher = {Berlin: Springer},
}

@book{Durrett99ESP,
 author = {Durrett, Rick},
 title = {Essentials of stochastic processes},
 fseries = {Springer Texts in Statistics},
 series = {Springer Texts Stat.},
 year = {1999},
 publisher = {New York, NY: Springer},
}

@article{Gurevich69,
 author = {Gurevich, B. M.},
 title = {Topological entropy of enumerable {Markov} chains},
 fjournal = {Soviet Mathematics. Doklady},
 journal = {Sov. Math., Dokl.},
 volume = {10},
 pages = {911--915},
 year = {1969},
}

@article{Gurevich70,
 author = {Gurevich, B. M.},
 title = {Shift entropy and {Markov} measures in the path space of a denumerable graph},
 fjournal = {Soviet Mathematics. Doklady},
 journal = {Sov. Math., Dokl.},
 volume = {11},
 pages = {744--747},
 year = {1970},
}

@misc{LZ26,
    title={Equidistribution of saddle periodic points for automorphisms on compact {K}\"ahler manifolds}, 
      author={Muhan Luo and Qi Zhou},
      year={2026},
      eprint={2609.30204},
      archivePrefix={arXiv},
}

@misc{DV24,
 author = {{de Th{\'e}lin}, Henry and Vigny, Gabriel},
 title = {Exponential mixing of all orders and {CLT} for generic birational maps of {$\mathbb{P}^k$}},
 year = {2024},
 eprint={2402.01178},
 archivePrefix={arXiv},
 shorthand={dTV24},
}

@incollection{Guedj10,
 author = {Guedj, Vincent},
 title = {Ergodic properties of rational maps},
 booktitle = {Quelques aspects des syst\`emes dynamiques polynomiaux},
 pages = {97--202},
 year = {2010},
 publisher = {Paris: Soci{\'e}t{\'e} Math{\'e}matique de France (SMF)},
}

@article{Lyubich83FAA,
 author = {Lyubich, M. Yu.},
 title = {The maximum-entropy measure of a rational endomorphism of the {Riemann} sphere},
 fjournal = {Functional Analysis and its Applications},
 journal = {Funct. Anal. Appl.},
 volume = {16},
 pages = {309--311},
 year = {1983},
}

@incollection {S99PS,
    AUTHOR = {Sibony, Nessim},
     TITLE = {Dynamique des applications rationnelles de {$\mathbb P^k$}},
 BOOKTITLE = {Dynamique et g\'eom\'etrie complexes ({L}yon, 1997)},
    SERIES = {Panor. Synth\`eses},
    VOLUME = {8},
     PAGES = {ix--x, xi--xii, 97--185},
 PUBLISHER = {Soc. Math. France, Paris},
      YEAR = {1999},
}

@article{BD05Duke,
 author = {Bedford, Eric and Diller, Jeffrey},
 title = {Energy and invariant measures for birational surface maps},
 fjournal = {Duke Mathematical Journal},
 journal = {Duke Math. J.},
 volume = {128},
 number = {2},
 pages = {331--368},
 year = {2005},
}

@article{DD04JGA,
 author = {Dinh, Tien-Cuong and Dupont, Christophe},
 title = {Dimension of the equilibrium measure of holomorphic maps},
 fjournal = {The Journal of Geometric Analysis},
 journal = {J. Geom. Anal.},
 volume = {14},
 number = {4},
 pages = {613--627},
 year = {2004},
}

@article {Rohl,
    AUTHOR = {Rohlin, V. A.},
     TITLE = {Exact endomorphisms of a {L}ebesgue space},
   JOURNAL = {Izv. Akad. Nauk SSSR Ser. Mat.},
  FJOURNAL = {Izvestiya Akademii Nauk SSSR. Seriya Matematicheskaya},
    VOLUME = {25},
      YEAR = {1961},
     PAGES = {499--530},
}

\end{document}